\title{Some mutually inconsistent generic large cardinals}
\author{Monroe Eskew}
\address{International Research Fellow of the Japan Society for the Promotion of Science \\
Institute of Mathematics \\
University of Tsukuba \\
1-1-1 Tennodai \\
Tsukuba, Ibaraki 305-8571 \\ Japan. }
\date{}
\newtheorem{theorem}{Theorem}[section]
\newtheorem{lemma}[theorem]{Lemma}
\newtheorem{proposition}[theorem]{Proposition}
\newtheorem{corollary}[theorem]{Corollary}
\newtheorem*{definition}{Definition}
\DeclareMathOperator{\dom}{dom}
\DeclareMathOperator{\ot}{ot}
\DeclareMathOperator{\p}{\mathcal{P}}
\DeclareMathOperator{\den}{d}
\DeclareMathOperator{\sat}{sat}
\DeclareMathOperator{\add}{Add}
\DeclareMathOperator{\col}{Col}
\DeclareMathOperator{\ord}{Ord}
\DeclareMathOperator{\reg}{Reg}
\DeclareMathOperator{\supp}{supp}
\DeclareMathOperator{\crit}{crit}
\begin{document}
\maketitle
\thispagestyle{empty}
\pagestyle{empty} 

In \cite{thesis}, a method was developed to show some non-implications between certain strong combinatorial properties of ideals.  It was shown that one may force to rid the universe of ideals of minimal density on a given successor cardinal above $\omega_1$, while preserving ``nonregular'' ideals.  Taylor proved that these are actually equivalent properties for ideals on $\omega_1$ \cite{t1}.  The author realized that the method was also able to show that a broad class of ``generic large cardinals'' are individually consistent (relative to conventional large cardinals), yet mutually contradictory.

In \cite{potentaxioms} and \cite{foremanhandbook}, Foreman proposed generic large cardinals as new axioms for set theory.  The main virtues of these axioms are their conceptual similarity to conventional large cardinals, combined with their ability to settle classical questions like the continuum hypothesis (CH).  Foreman noted in \cite{foremanhandbook} that some trouble was raised for this proposal by the discovery of a few pairs of these axioms that were mutually inconsistent.  For example, Woodin showed that under CH, one cannot have both an $\omega_1$-dense ideal on $\omega_1$ and a normal ideal on $[\omega_2]^{\omega_1}$ with quotient algebra isomorphic to $\col(\omega,< \! \omega_2)$.  An $\omega_1$-dense ideal on $\omega_1$ is known to be consistent with CH from large cardinals, but no consistency proof is known for Woodin's second ideal.  However, Foreman noted that Woodin's argument works equally well if we replace $\omega_2$ by an inaccessible $\lambda$, and the consistency of this kind of ideal on $[\lambda]^{\omega_1}$ can be established from a huge cardinal.  Foreman remarked that such examples of individually consistent but mutually inconsistent generic large cardinals seem to be relatively rare.  This paper brings to light a new class of examples through a strengthening of Woodin's theorem:

\begin{theorem}
\label{incon1}
Suppose $\kappa$ is a successor cardinal and there is a $\kappa$-complete, $\kappa$-dense ideal on $\kappa$.  Then neither of the following hold:
\begin{enumerate}[(1)]
\item There is a weakly inaccessible $\lambda > \kappa$ and a normal, $\kappa$-complete, $\lambda^+$-saturated ideal on $[\lambda]^\kappa$.
%\item There is a $\kappa$-complete, $\kappa$-dense ideal on $\kappa$.
\item There is a successor cardinal $\lambda > \kappa$ and a normal, $\kappa$-complete, $\lambda$-saturated on $[\lambda]^\kappa$, where the forcing associated to the ideal has uniform density $\lambda$ and preserves $\lambda^+$-saturated ideals on $\lambda$.
\end{enumerate}
\end{theorem}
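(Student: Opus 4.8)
The plan is to assume, toward a contradiction, that the $\kappa$-complete $\kappa$-dense ideal $I$ on $\kappa=\mu^+$ coexists with one of the two ideals $J$ on $[\lambda]^\kappa$, and to play the generic elementary embedding coming from $J$ against the rigidity that $\kappa$-density imposes on $I$. Write $k\colon V\to N\subseteq V[H]$ for the generic ultrapower embedding obtained by forcing with $\mathbb{Q}=\p([\lambda]^\kappa)/J$, with $H$ generic. The entire argument will be a comparison, carried out inside $V[H]$, of $k$ with a second generic embedding $j\colon V\to M$ read off from $I$; the point of the density hypothesis is that $j$ is extremely constrained.

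First I would record the structural facts about $k$. Since $J$ is normal, fine and $\kappa$-complete, we have $\crit(k)=\kappa$, $k\upharpoonright\kappa=\mathrm{id}$, and the seed $s=[\mathrm{id}]_H=\{k(\alpha):\alpha<\lambda\}$ lies in $N$. Because $s\in k([\lambda]^\kappa)$, the model $N$ computes $|s|=k(\kappa)$, while $\ot(s)=\lambda$; hence $\kappa<k(\kappa)\le\lambda$ and $N\models |\lambda|=k(\kappa)$, so $\mathbb{Q}$ collapses $\kappa$ (and the interval $[\kappa,\lambda)$) to cardinality $\mu$. The saturation of $J$ also supplies the relevant closure of $N$ in $V[H]$. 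On the other side I would extract from $\kappa$-density the two facts that drive the contradiction, both standard consequences of $\p(\kappa)/I$ having a dense subset of size $\kappa$ (see \cite{foremanhandbook}): that $2^\mu=\kappa$ in $V$, and that the generic embedding $j$ from $I$ satisfies $j(\kappa)=\kappa^{+}$, with $M$ closed under $\kappa$-sequences by saturation.

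The heart of the argument is to witness $j$ \emph{inside} $V[H]$, i.e.\ to produce a $V$-generic filter for the size-$\kappa$ forcing $\p(\kappa)/I$ in $V[H]$, yielding $j\colon V\to M\subseteq V[H]$ with $\crit(j)=\kappa$. This is exactly where the two hypothesis packages enter, each forcing $\mathbb{Q}$ to be \emph{universal} for small posets in the manner of a Levy collapse: in case (2) I would use the uniform density $\lambda$ together with the collapse of $[\kappa,\lambda)$ to absorb $\p(\kappa)/I$; in case (1) the weak inaccessibility of $\lambda$ and the $\lambda^+$-saturation of $J$ are intended to play the analogous role. I expect this absorption step to be the main obstacle: Woodin's original theorem identifies the quotient concretely with $\col(\mu,<\lambda)$ and invokes its universality directly, whereas here one must extract the same absorption from the abstract hypotheses, and must verify that $N$ is exactly as closed as the later cardinal comparison needs, so that the value of $2^\mu$ transfers correctly among $N$, $V[H]$ and $V$.

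Once both embeddings sit in $V[H]$ — each with critical point $\kappa$, each fixing the ordinals below $\kappa$, and $k(\kappa),j(\kappa)$ both determined above — the final contradiction is a counting in the spirit of Woodin's. The density rigidity ($2^\mu=\kappa$ in $V$ and $j(\kappa)=\kappa^{+}$, reflected through $M$) collides with the size and saturation that $J$ forces through $k$ and the collapse: the two models must agree on the generic object $G$ for $I$ (it is captured by the closure of $N$), yet they assign incompatible cardinalities to $\p(\kappa)^V$ and to the relevant successor of $\mu$. In case (1) the weak inaccessibility of $\lambda$ is the ingredient that closes the gap, while in case (2) the clash is routed through the hypothesis that $\mathbb{Q}$ preserves $\lambda^+$-saturated ideals on $\lambda$, applied to a saturated ideal transported from $V$. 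Granting the absorption step, this closing count is delicate but follows the template of Woodin's argument once the second embedding is secured.
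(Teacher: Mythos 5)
There is a genuine gap, and it sits exactly where you flagged it: the ``absorption'' step. Your plan is to realize the generic embedding $j$ of the $\kappa$-dense ideal $I$ \emph{inside} the extension $V[H]$ by $\mathbb{Q}=\p([\lambda]^\kappa)/J$, on the grounds that $\mathbb{Q}$ collapses $[\kappa,\lambda)$ and should therefore be ``universal for small posets in the manner of a Levy collapse.'' Nothing in the hypotheses supports this: $\lambda$-saturation together with uniform density $\lambda$ (or $\lambda^+$-saturation with $\lambda$ weakly inaccessible) does not make $\mathbb{Q}$ absorb $\p(\kappa)/I$, and collapsing cardinals is far weaker than universality. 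This is precisely why Woodin's original argument needs the quotient to be literally $\col(\omega,<\!\omega_2)$, and it is precisely the step the paper is engineered to avoid. The paper's argument runs in the \emph{opposite} direction: it never imports $j$ into $V[G]$. Instead, letting $k:V\to N\subseteq V[G]$ be the embedding from $J$ (so $k(\kappa)=\lambda$), elementarity gives that $k(I)$ is a normal $\lambda$-dense ideal on $\lambda$ in $N$, and the $\lambda$-closure of $N$ transfers this statement to $V[G]$. The contradiction is then that $V[G]$ \emph{cannot} contain such an ideal. In case (2) this is Lemma~\ref{nodense}, whose proof uses the Duality Theorem: a normal $\lambda$-dense ideal in $V^{\mathbb P}$ would induce, back in $V$, a $\lambda^+$-saturated normal ideal $I'$ with $\p(Z)/I' * j(\mathbb P)$ nowhere $\lambda$-dense, contradicting $\lambda$-density of the quotient. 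In case (1) the paper defines $I'=\{X\subseteq\lambda: 1\Vdash X\in k(I)\}$ in $V$, uses weak inaccessibility of $\lambda$ to get canonical functions $f_\alpha$ ($\alpha\le\lambda^+$) increasing modulo $I'$ with $1\Vdash\check\alpha=[f_\alpha]$, and then observes that a further generic ultrapower of $V[G]$ by $k(I)$ must have $\ot$ of the image of $\lambda$ exceeding $\lambda^+$, while $\lambda^+$-saturation of $k(I)$ and $\lambda=\mu^+$ in $V[G]$ force it to equal $\lambda^+$.

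Two further points. First, your closing ``counting'' contradiction is not actually specified --- ``incompatible cardinalities of $\p(\kappa)^V$'' is a placeholder, not an argument, and since the first embedding is never secured there is nothing to count against. Second, you assert that $2^\mu=\kappa$ is a standard consequence of $\kappa$-density; the paper pointedly advertises that, unlike Woodin's theorem, it needs \emph{no} cardinal arithmetic assumptions, and this claim is at best unsubstantiated and plays no role in the actual proof. If you want to salvage your outline, the fix is not to look for absorption but to invert the direction of transfer: use elementarity of $k$ and closure of $N$ to move the density hypothesis up to $\lambda$, and then attack the resulting $\lambda$-dense ideal on $\lambda$ in $V[G]$ with the duality machinery (case (2)) or with canonical functions and the limit-cardinal structure of $\lambda$ in $V$ (case (1)).
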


The ideal on $[\omega_2]^{\omega_1}$ that Woodin hypothesized will be seen to be a special case of (2), and we will not need cardinal arithmetic assumptions in our argument.  We will also give a relatively simple proof of the individual consistency of (1) and (2) from a huge cardinal, and even their joint consistency from a huge cardinal with two targets.  For the individual consistency of dense ideals on general successors of regular cardinals, we refer to \cite{thesis}.  %We also give some extensions of Theorem~\ref{incon1}.

Let us fix some terminology and notation.  For any partial order $\mathbb{P}$, the saturation of $\mathbb{P}$, $\sat(\mathbb{P})$, is the least cardinal $\kappa$ such that every antichain in $\mathbb{P}$ has size less than $\kappa$.  The density of $\mathbb{P}$, $\den(\mathbb{P})$, is the least cardinality of a dense subset of $\mathbb{P}$.  Clearly $\sat(\mathbb{P}) \leq \den(\mathbb{P})^+$ for any $\mathbb{P}$.  We say $\mathbb{P}$ is $\kappa$-saturated if $\sat(\mathbb{P}) \leq \kappa$, and $\mathbb{P}$ is $\kappa$-dense if $\den(\mathbb{P}) \leq \kappa$.  A synonym for $\kappa$-saturation is the $\kappa$ chain condition ($\kappa$-c.c.).

%Erd\H{o}s and Tarski~\cite{ET} proved that $\sat(\mathbb{P})$ is always regular.

For an ideal $I$ on a set $Z$, we let $I^+$ denote subsets of $Z$ not in $I$.  $\p(Z)/I$ denotes the boolean algebra of subsets of $Z$ modulo $I$ under the usual operations, where two sets are equivalent if their symmetric difference is in $I$.  The saturation and density of $I$ refer to that of $\p(Z)/I$.  For preliminaries on forcing with ideals, we refer the reader to~\cite{foremanhandbook}.  The following is the key backdrop for this paper:

\begin{proposition}Suppose $I$ is a normal, $\lambda^+$-saturated ideal on $Z \subseteq \p(\lambda)$.  If $G \subseteq \p(Z)/I$ is generic, the ultrapower $V^Z/G$ is isomorphic to a transitive class $M \subseteq V[G]$.  Suppose $I$ is $\kappa$-complete but $I \restriction A$ is not $\kappa^+$-complete for any $A \in I^+$.  If $j : V \to M$ is the canonical elementary embedding, then $\crit(j) = \kappa$, $[id]_G = j[\lambda]$, and $M^\lambda \cap V[G] \subseteq M$.
\end{proposition}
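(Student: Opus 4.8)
The plan is to build the generic ultrapower by hand and verify the three conclusions in the order $[id]_G=j[\lambda]$, then $\crit(j)=\kappa$, and finally the closure $M^\lambda\cap V[G]\subseteq M$, which I expect to be the real work. First I set up $N=V^Z/G$: its elements are classes $[f]_G$ of functions $f\in {}^Z V$, with $[f]_G\in[g]_G$ iff $\{x:f(x)\in g(x)\}\in G$ and $[f]_G=[g]_G$ iff $\{x:f(x)=g(x)\}\in G$. Since $G$ is a $V$-generic ultrafilter on $\p(Z)/I$, it lifts to a $V$-ultrafilter on $\p(Z)^V$ extending the dual filter $I^\ast$, so the \L o\'s theorem holds in the form $N\models\varphi([f_1]_G,\dots)$ iff $\{x:\varphi(f_1(x),\dots)\}\in G$, the existential step using a choice of witnessing function in $V$. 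Well-foundedness of $N$ comes from $\lambda^+$-saturation, as saturated ideals are precipitous; I then let $M$ be the transitive collapse and $j\colon V\to M$ the canonical embedding, $j(a)=[c_a]_G$ with $c_a$ the constant function. A device I use throughout: a partition of a positive set into fewer than $\theta$ pieces is refined by $G$ whenever the relevant ``choose a piece'' set of conditions is dense below it, and genericity then selects exactly one piece into $G$.

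For $[id]_G=j[\lambda]$, normality includes fineness, so $\{x:\alpha\in x\}\in I^\ast\subseteq G$ for each $\alpha<\lambda$, whence $j(\alpha)=[c_\alpha]_G\in[id]_G$ and $j[\lambda]\subseteq[id]_G$. Conversely, if $[f]_G\in[id]_G$ then $A:=\{x:f(x)\in x\}\in G$, so $f$ is regressive on $A$; by normality every positive subset of $A$ has a further positive subset on which $f$ is constant, so the conditions forcing $f$ constant are dense below $[A]$, and genericity yields a single $\alpha<\lambda$ with $\{x:f(x)=\alpha\}\in G$, i.e.\ $[f]_G=j(\alpha)$. Hence $[id]_G\subseteq j[\lambda]$.

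For $\crit(j)=\kappa$, I argue by induction on $\alpha<\kappa$ that $j(\alpha)=\alpha$: if $[f]_G<j(\alpha)$ then $f$ maps a $G$-set into $\alpha$, and since the partition $\langle\{x:f(x)=\beta\}:\beta<\alpha\rangle$ has fewer than $\kappa$ pieces, $\kappa$-completeness of $I$ makes the ``choose a piece'' set dense (a condition disjoint mod $I$ from every piece would be a union of fewer than $\kappa$ null sets, hence null), so some piece lies in $G$ and $[f]_G=j(\beta)=\beta$. This gives $\crit(j)\ge\kappa$. For the reverse inequality I use that $I\restriction A$ is not $\kappa^+$-complete for the $A\in G$ that $G$ selects: fix a partition $A=\bigsqcup_{\beta<\kappa}A_\beta$ with each $A_\beta\in I$, and set $f(x)=\beta$ for $x\in A_\beta$. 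Then for every $\beta<\kappa$ the complement of $\{x:f(x)>\beta\}$ in $A$ is a union of fewer than $\kappa$ null sets, so $\{x:f(x)>\beta\}\in G$ and thus $[f]_G\ge\kappa$, while $f$ maps $A$ into $\kappa$ forces $[f]_G<j(\kappa)$; hence $j(\kappa)>\kappa$ and $\crit(j)=\kappa$.

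The main obstacle is the closure $M^\lambda\cap V[G]\subseteq M$, where $\lambda^+$-saturation must be leveraged to amalgamate a sequence known only in $V[G]$ into a single ground-model function. Given $\vec s=\langle s_\xi:\xi<\lambda\rangle\in V[G]$ with $s_\xi=[f_\xi]_G$, I fix a name $\dot{\vec s}$ and, for each $\xi$, using the $\lambda^+$-c.c., a maximal antichain $A_\xi$ of size $\le\lambda$ and functions $f^\xi_a\in {}^Z V$ $(a\in A_\xi)$ with $a\Vdash \dot s_\xi=[\check f^\xi_a]_G$, together with a partition $\{B_a:a\in A_\xi\}$ of $Z$ in $V$ realizing $A_\xi$ (so $[B_a]=a$). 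The key step, and the point where saturation is indispensable, is to define a \emph{single} $F\in {}^Z V$ by $F(x)(\xi)=f^\xi_a(x)$ for the unique $a$ with $x\in B_a$; boundedness of each $A_\xi$ is exactly what lets the $G$-dependent choices be absorbed into the value $F(x)$ rather than into a $V[G]$-sequence of selections. For the unique $a^\ast\in A_\xi\cap G$ one has $B_{a^\ast}\in G$ and $s_\xi=[f^\xi_{a^\ast}]_G$, so by \L o\'s the object $[F]_G$ is a function in $M$ with domain $j(\lambda)$ and $[F]_G(j(\xi))=s_\xi$. Finally I restrict $[F]_G$ to $[id]_G=j[\lambda]\in M$ and precompose with the increasing enumeration of $j[\lambda]$ computed in $M$, which by order-absoluteness is exactly $\xi\mapsto j(\xi)$ of order type $\lambda$, recovering $\vec s$ as an element of $M$. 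The same construction applied to a putative $\omega$-descending chain reproves well-foundedness, since it would place an internal infinite $\in$-descending sequence inside $N$, contradicting \L o\'s.
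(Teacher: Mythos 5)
Your argument is correct and is the standard one; the paper itself states this proposition without proof, as known background, deferring to Foreman's handbook chapter for the preliminaries on forcing with ideals. The only step you assert without justification is the disjointification of each maximal antichain $A_\xi$ of size $\leq\lambda$ into an actual partition $\{B_a\}$ of $Z$ in $V$ with $[B_a]=a$ --- since $I$ need not be $\lambda^+$-complete, this is exactly where normality and fineness are used (via diagonal unions, e.g.\ taking $B_a=\{z\in C_a : \alpha_a\in z\text{ and }z\notin C_b\text{ for all }b\text{ with }\alpha_b\in z\cap\alpha_a\}$ for an enumeration of the antichain by ordinals $\alpha_a<\lambda$), so it is worth a line.
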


Recall that $[\lambda]^\kappa$ denotes $\{ z \subseteq \lambda : \ot(z) = \kappa \}$.  If $j : V \to M$ is an embedding arising from a normal $\lambda^+$-saturated ideal on $[\lambda]^\kappa$, then by \L o\'{s}'s theorem, $\lambda = \ot(j[\lambda]) = \ot([id]) = j(\kappa)$.

\section{Duality}

In this section we present a generalization of Foreman's Duality Theorem~\cite{foremanduality} that will be used for both the mutual inconsistency and individual consistency of the relevant principles.

\begin{theorem}
\label{dualitygen}
Suppose $I$ is a precipitous ideal on $Z$ and $\mathbb{P}$ is a boolean algebra.  Let $j: V \to M \subseteq V[G]$ denote a generic ultrapower embedding arising from $I$.  Suppose $\dot{K}$ is a $\p(Z)/I$-name for an ideal on $j(\mathbb P)$ such that whenever $G * h$ is $\p(Z)/I * j(\mathbb P)/ K$-generic and $\hat H = \{ p : [p]_K \in h \}$, we have:

\begin{enumerate}[(1)]
\item $1 \Vdash_{\p(Z)/I * j(\mathbb P)/ K} \hat{H}$ is $j(\mathbb{P})$-generic over $M$,
\item $1 \Vdash_{\p(Z)/I * j(\mathbb P)/ K} j^{-1}[\hat{H}]$ is $\mathbb{P}$-generic over $V$, and
\item for all $p \in \mathbb{P}$, $1 \nVdash_{\p(Z)/I} j(p) \in K$.
\end{enumerate}

Then there is $\mathbb{P}$-name for an ideal $J$ on $Z$ and a canonical isomorphism
\[ \iota : \mathcal{B}( \mathbb{P} * \dot{\p(Z)/J}) \cong \mathcal{B}( \p(Z)/I * \dot{j(\mathbb{P})/K} ). \]
\end{theorem}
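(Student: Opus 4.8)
The plan is to read the isomorphism off the generic objects on each side. First I would force with the right-hand side $\mathbb{R} := \p(Z)/I * \dot{j(\mathbb{P})/K}$ to obtain $G * h$, and set $\hat{H} = \{p : [p]_K \in h\}$ and $g = j^{-1}[\hat{H}]$. By hypotheses (1) and (2), $\hat{H}$ is $j(\mathbb{P})$-generic over $M$ and $g$ is $\mathbb{P}$-generic over $V$; since $p \in g$ iff $j(p) \in \hat{H}$ we have $j[g] \subseteq \hat{H}$, so $j$ lifts to an elementary embedding $j^+ : V[g] \to M[\hat{H}]$ defined by $j^+(\tau_g) = j(\tau)_{\hat{H}}$, with $\crit(j^+) = \crit(j)$, $M[\hat{H}] \subseteq V[G][h]$, and the same seed $[\mathrm{id}]_G$.

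Next I would use hypothesis (3) to locate $\mathbb{P}$ as the first step of $\mathbb{R}$. The map $e : p \mapsto \| j(\check{p}) \in \hat{H} \|$ is a Boolean homomorphism of $\mathbb{P}$ into $\mathcal{B}(\mathbb{R})$; hypothesis (2) makes $e$ a complete embedding (it sends $\mathbb{P}$-generics to the always-generic $g$) and hypothesis (3) makes it injective, since each nonzero $p$ has $e(p) \neq 0$. Hence $\mathbb{R}$ factors as $\mathbb{P} * (\mathbb{R}/g)$ for a $\mathbb{P}$-name $\mathbb{R}/g \in V[g]$. Working in $V[g]$, I would then define the ideal $J$ on $Z$ induced by the lift: $A \in J$ iff $\mathbb{R}/g$ forces $[\mathrm{id}]_G \notin j^+(A)$. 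Packaging this across conditions of $\mathbb{P}$ gives a $\mathbb{P}$-name $\dot{J}$, and one checks $J$ is an ideal on $Z$ in $V[g]$, that $\p(Z)/J$ over $V[g]$ reconstructs a generic ultrapower agreeing with $j^+$, and that its generic is $\bar{G} = \{[A]_J : [\mathrm{id}]_G \in j^+(A)\}$.

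The heart of the argument is a \L o\'{s}-style dictionary between the two pairs of generics. Representing an arbitrary element of $j(\mathbb{P})$ as $[f]_G$ for $f : Z \to \mathbb{P}$ in $V$, and setting $B_f := \{z \in Z : f(z) \in g\} \in V[g]$, I would prove the identities $[f]_G \in \hat{H} \iff B_f \in \bar{G}$ and $[A]_I \in G \iff A \in \bar{G}$ for $A \in I^+$; specializing $f$ to a constant $p$ recovers the $\mathbb{P}$-coordinate via $j(\check{p}) \in \hat{H} \iff p \in g$. Together these show $(G, h)$ and $(g, \bar{G})$ are mutually definable with $V[G][h] = V[g][\bar{G}]$. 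I would then define $\iota$ on the dense set of right-hand conditions $(A, [f]_G)$ by sending each to the left-hand condition asserting ``$A \in \bar{G}$ and $B_f \in \bar{G}$,'' and verify directly that $\iota$ preserves order and incompatibility and has dense range, so that it extends canonically to the desired Boolean isomorphism.

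The main obstacle I anticipate is the interface between the second and third steps: verifying that $J$ is genuinely an element of $V[g]$, that is, that the name $\dot{J}$ does not secretly depend on $h$ beyond $g$, and that $\mathbb{R}/g$ is faithfully $\p(Z)/J$ rather than merely forcing-equivalent to it, so that the correspondence upgrades from an equivalence of generic extensions to an honest isomorphism of Boolean algebras. This is exactly where hypothesis (3) does its work: without it some $j(p)$ would be forced into $K$, the embedding $e$ would fail to be injective, and $g$ would not be a faithful $\mathbb{P}$-generic. It is also where the generality of $Z$ (no normality or $[\lambda]^\kappa$ structure assumed) demands careful bookkeeping with the function representatives $f : Z \to \mathbb{P}$ and with well-definedness modulo $I$ and modulo $J$.
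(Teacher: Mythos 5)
Your proposal is correct and follows essentially the same route as the paper: the same regular embedding $e(p) = \| j(p) \in \hat H\|$ (injective by (3), antichain-preserving by (2)), the same definition of $J$ in $V[H]$ via the lifted embedding and the seed $[\mathrm{id}]_M$, and the same density verification using representatives $f : Z \to \mathbb{P}$ and the sets $\{ z : f(z) \in H\}$. The only cosmetic difference is that you describe $\iota$ from the right-hand side to the left via the dictionary between generics, whereas the paper defines it left-to-right by $(p,\dot A) \mapsto e(p) \wedge \| [\mathrm{id}]_M \in \hat j(\dot A)\|$; these are the same dense embedding.
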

\begin{proof}
Let $e : \mathbb{P} \to \mathcal{B}(\p(Z)/I * j(\mathbb{P})/K)$ be defined by $p \mapsto || j(p) \in \hat{H} ||$.  By (3), this map has trivial kernel.  By elementarity, it is an order and antichain preserving map.  If $A \subseteq \mathbb{P}$ is a maximal antichain, then it is forced that $j^{-1}[\hat{H}] \cap A \not= \emptyset$.  Thus $e$ is regular.

Whenever $H \subseteq \mathbb{P}$ is generic, there is a further forcing yielding a generic $G * h \subseteq \p(Z)/I * j(\mathbb{P})/K$ such that $j[H] \subseteq \hat{H}$.  Thus there is an embedding $\hat{j} : V[H] \to M[\hat{H}]$ extending $j$.  In $V[H]$, let $J = \{ A \subseteq Z : 1 \Vdash_{(\p(Z)/I * j(\mathbb{P})/K) / e[H]} [id]_M \notin \hat{j}(A) \}$.  In $V$, define a map $\iota : \mathbb{P} * \dot{\p(Z)/J} \to \mathcal{B}( \p(Z)/I * \dot{j(\mathbb{P})/K} )$ by $(p,\dot{A}) \mapsto e(p) \wedge || [id]_M \in \hat{j}(\dot{A}) ||$.  It is easy to check that $\iota$ is order and antichain preserving.

We want to show the range of $\iota$ is dense.  Let $(B,\dot{q}) \in \p(Z)/I * \dot{j(\mathbb{P})/K}$, and WLOG, we may assume there is some $f : Z \to V$ in $V$ such that $B \Vdash \dot{q} = [[f]_M]_K$.  By the regularity of $e$, let $p \in \mathbb{P}$ be such that for all $p' \leq p$, $e(p') \wedge (B,\dot{q}) \not= 0$.  Let $\dot{A}$ be a $\mathbb{P}$-name such that $p \Vdash \dot{A} = \{ z \in B : f(z) \in H \}$, and $\neg p \Vdash \dot{A} = Z$.  $1 \Vdash_\mathbb{P} \dot{A} \in J^+$ because for any $p' \leq p$, we can take a generic $G * h$ such that $e(p') \wedge (B, \dot{q}) \in G*h$.  Here we have $[id]_M \in j(B)$ and $[f]_M \in \hat{H}$, so $[id]_M \in \hat{j}(A)$.  Furthermore, $\iota(p,\dot{A})$ forces $B \in G$ and $q \in h$, showing $\iota$ is a dense embedding.
\end{proof}

\begin{proposition}
\label{ultequal}
If $Z,I,\mathbb{P},J,K,\iota$ are as in Theorem~\ref{dualitygen}, then whenever $H \subseteq \mathbb{P}$ is generic, $J$ is precipitous and has the same completeness and normality that $I$ has in $V$.  Also, if $\bar{G} \subseteq \p(Z)/ J$ is generic and $G * h = \iota[H * \bar{G}]$, then if $\hat{j} : V[H] \to M[\hat{H}]$ is as above, $M[\hat{H}] = V[H]^Z/\bar{G}$ and $\hat{j}$ is the canonical ultrapower embedding.
\end{proposition}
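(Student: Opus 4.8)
The plan is to show that forcing with $\p(Z)/J$ over $V[H]$ reproduces exactly the extended embedding $\hat{j}$, and then to read the ideal-theoretic properties of $J$ off the fact that $\hat{j}$ agrees with $j$ on $V$. First I would unwind the definition of $\iota$ to pin down the generic ultrafilter. Since $\iota(1,\dot{A}) = || [id]_M \in \hat{j}(\dot{A}) ||$, a set $A \subseteq Z$ lies in the $V[H]$-ultrafilter $U$ induced by $\bar{G}$ (that is, $[A]_J \in \bar{G}$) precisely when $[id]_M \in \hat{j}(A)$ holds in the extension by $G * h = \iota[H * \bar{G}]$. This characterization of $U$ is the engine for everything that follows, so I would record it first.

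Next I would define $\Phi : V[H]^Z/\bar{G} \to M[\hat{H}]$ by $[f]_{\bar{G}} \mapsto \hat{j}(f)([id]_M)$, noting $[id]_M \in j(Z)$ by \L o\'{s}'s theorem. Well-definedness, injectivity, and elementarity of $\Phi$ would all come from \L o\'{s}'s theorem combined with the characterization of $U$: for any formula $\varphi$ and $f_1,\dots,f_n \in V[H]$, the set $\{ z : \varphi(f_1(z),\dots,f_n(z)) \}$ lies in $U$ iff $[id]_M \in \hat{j}(\{ z : \varphi(f_1(z),\dots,f_n(z)) \})$, which by elementarity of $\hat{j}$ holds iff $M[\hat{H}] \models \varphi(\hat{j}(f_1)([id]_M),\dots,\hat{j}(f_n)([id]_M))$. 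Taking $\varphi$ to be equality yields well-definedness and injectivity, and the general case yields elementarity.

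The main work, and where I expect the real obstacle, is surjectivity of $\Phi$. Given $x \in M[\hat{H}]$, write $x = \tau^{\hat{H}}$ for a $j(\mathbb{P})$-name $\tau \in M$; by \L o\'{s}'s theorem applied to the original ultrapower $M = V^Z/G$, there is $F : Z \to V$ in $V$ with $\tau = j(F)([id]_M)$, and since being a $j(\mathbb{P})$-name is expressible in $M$ and pulls back under $j$, I may assume each $F(z)$ is a $\mathbb{P}$-name. Setting $f(z) = F(z)^H$ defines $f \in V[H]$, and I would verify $\hat{j}(f)([id]_M) = \tau^{\hat{H}} = x$ by tracking the transport of names: $\hat{j}$ carries a $\mathbb{P}$-name for $f$ to the $j(\mathbb{P})$-name $j(F)$ interpreted by $\hat{H}$, so evaluation at $[id]_M$ returns $\tau^{\hat{H}}$. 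The delicate point is exactly this identification of $M[\hat{H}]$ with $\{\hat{j}(f)([id]_M) : f \in V[H]\}$. Granting it, $\Phi$ is a bijective elementary map and hence an isomorphism; as $M[\hat{H}]$ is transitive we conclude $V[H]^Z/\bar{G} = M[\hat{H}]$, and since $\Phi([c_x]_{\bar{G}}) = \hat{j}(x)$, the canonical ultrapower embedding is identified with $\hat{j}$, proving the second assertion.

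The first assertion then falls out of this identification. Precipitousness of $J$ is immediate, since the generic ultrapower $V[H]^Z/\bar{G}$ equals the well-founded class $M[\hat{H}]$, which is a set-generic extension of the transitive model $M$. For completeness and normality I would exploit that $\hat{j}$ extends $j$ and so agrees with it on the ordinals, giving $\crit(\hat{j}) = \crit(j)$ and $\hat{j}[\lambda] = j[\lambda]$. Because the completeness of a precipitous ideal is determined by the critical point of its generic embedding, $J$ inherits the completeness of $I$; and because normality of an ideal on $Z \subseteq \p(\lambda)$ amounts to the seed of the identity equalling $j[\lambda]$, while $\Phi$ sends $[id]_{\bar{G}}$ to $[id]_M$ and $\hat{j}[\lambda] = j[\lambda]$, the ideal $J$ is normal exactly when $I$ is.
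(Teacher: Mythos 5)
Your proposal is correct and follows essentially the same route as the paper: both identify $V[H]^Z/\bar G$ with $M[\hat H]$ via the map $[f] \mapsto \hat j(f)([id]_M)$, using the characterization $A \in \bar G \iff [id]_M \in \hat j(A)$, and then read precipitousness, completeness, and normality off the agreement of $\hat j$ with $j$. The only divergence is localized to the key identification: you prove surjectivity directly by transporting $j(\mathbb P)$-names back to functions $z \mapsto F(z)^H$, whereas the paper factors $\hat j = k \circ i$ through the canonical ultrapower and concludes $k$ is the identity because it is elementary and surjective on the ordinals of $M[\hat H]$; both arguments are standard and correct.
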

\begin{proof}
Suppose $H * \bar{G} \subseteq  \mathbb{P} * \p(Z) / J$ is generic, and let $G * h = \iota[H * \bar{G}]$ and $\hat{H} = \{ p : [p]_K \in h \}$.  For $A \in J^+$, $A \in \bar{G}$ iff $[id]_M \in \hat{j}(A)$.  If $i : V[H] \to N = V[H]^Z / \bar{G}$ is the canonical ultrapower embedding, then there is an elementary embedding $k : N \to M[\hat{H}]$ given by $k([f]_N) = \hat{j}(f)([id]_M)$, and $\hat{j} = k \circ i$.  Thus $N$ is well-founded, so $J$ is precipitous.  If $f : Z \to \ord$ is a function in $V$, then $k([f]_N) = j(f)([id]_M) = [f]_M$.  Thus $k$ is surjective on ordinals, so it must be the identity, and $N = M[\hat{H}]$.  Since $i = \hat{j}$ and $\hat{j}$ extends $j$, $i$ and $j$ have the same critical point, so the completeness of $J$ is the same as that of $I$.  Finally, since $[id]_N = [id]_M$, $I$ is normal in $V$ iff $J$ is normal in $V[H]$, because $j \restriction \bigcup Z = \hat{j} \restriction \bigcup Z$, and normality is equivalent to $[id] = j[\bigcup Z]$.
\end{proof}

Theorem~\ref{dualitygen} is optimal in the sense that it characterizes exactly when an elementary embedding coming from a precipitous ideal can have its domain enlarged via forcing:

\begin{proposition}
Let $I$ be a precipitous ideal on $Z$ and $\mathbb{P}$ a boolean algebra.  The following are equivalent.
\begin{enumerate}[(1)]
\item In some generic extension of a $\p(Z)/I$-generic extension, there is an elementary embedding $\hat j : V[H] \to M[\hat H]$, where $j : V \to M$ is the elementary embedding arising from $I$ and $H$ is  $\mathbb P$-generic over $V$.
\item There are  $p \in \mathbb P$, $A \in I^+$, and a $\p(A)/I$-name for an ideal $K$ on $j(\mathbb P \restriction p)$ such that $\p(A)/I * j(\mathbb P \restriction p)/K$ satisfies the hypothesis of Theorem \ref{dualitygen}.
%\item There are $p \in \mathbb P$, $A \in I^+$, a $\mathbb P$-name $\dot J$ for an ideal on $A$, a $\p(A)/I$-name for an ideal $K$ on $j(\mathbb P \restriction p)$, and an isomorphism $\iota : \mathcal B(\mathbb P \restriction p * \p(A)/J) \cong \mathcal B(\p(A)/I * j(\mathbb P \restriction p) / K)$, such that $\iota p' = || j(p') \in j(H) ||$ for all $p' \leq p$.
\end{enumerate}
\end{proposition}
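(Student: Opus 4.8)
The plan is to prove the two implications separately, with the reverse direction (2)$\Rightarrow$(1) being essentially a repackaging of the machinery already developed, and the forward direction (1)$\Rightarrow$(2) carrying the real content. For (2)$\Rightarrow$(1), given $p$, $A$, and the name $K$ witnessing the hypothesis of Theorem~\ref{dualitygen} applied to $I\restriction A$ on $A$ and the boolean algebra $\mathbb{P}\restriction p$, I would invoke Theorem~\ref{dualitygen} to produce the $\mathbb{P}\restriction p$-name $J$ and the isomorphism $\iota$, and then apply Proposition~\ref{ultequal}. Forcing with $\mathbb{P}\restriction p * \dot{\p(A)/J}$, or equivalently across $\iota$ with $\p(A)/I * \dot{j(\mathbb{P}\restriction p)/K}$, first yields a $\p(Z)/I$-generic (below $A$) with its ultrapower $j:V\to M$, and then the extended embedding $\hat j : V[H]\to M[\hat H]$ from Proposition~\ref{ultequal}. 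Since a $\mathbb{P}\restriction p$-generic $H$ generates a $\mathbb{P}$-generic over $V$ containing $p$, and the whole construction takes place in a generic extension of a $\p(Z)/I$-generic extension, this is exactly statement (1).

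The heart of the matter is (1)$\Rightarrow$(2). I would fix the scenario of (1): a $\p(Z)/I$-generic $G$ giving $j:V\to M$, followed by forcing with some $\mathbb{S}\in V[G]$ that adds $\hat H$, generic over $M$ for $j(\mathbb{P})$, together with the extension $\hat j\supseteq j$ satisfying $\hat H=\hat j(H)$. Elementarity immediately gives $H=j^{-1}[\hat H]$, so that $H$ is determined by $\hat H$ and $\|q\in\dot H\|=\|j(q)\in\dot{\hat H}\|$ as boolean values in $\mathbb{S}$. First I would pass to a condition, choosing $A\in I^+$ and a condition of $\mathbb{S}$ below which all the relevant genericity and elementarity statements are forced, and replacing $\p(Z)/I$ by $\p(A)/I$ and $\mathbb{S}$ by its restriction. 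I then define the $\p(A)/I$-name $K$ by setting, in $V[G]$, $K=\{b\in j(\mathbb{P}) : \|b\in\dot{\hat H}\|_{\mathbb{S}}=0\}$; because $\hat H$ is forced to be a generic ultrafilter, $K$ is a proper ideal on $j(\mathbb{P})$. To locate $p$, consider the kernel $N=\{q\in\mathbb{P} : \|q\in\dot H\|=0\}$, an ideal on $\mathbb{P}$. Since $H$ is forced to be a generic ultrafilter disjoint from $N$, $N$ cannot be dense, so there is $p\neq 0$ with $N\cap(\mathbb{P}\restriction p)=\{0\}$. Restricting $K$ to $j(\mathbb{P})\restriction j(p)$ then makes hypothesis (3) of Theorem~\ref{dualitygen} hold, because $1\Vdash_{\p(A)/I} j(q)\in K$ is equivalent to $\|j(q)\in\dot{\hat H}\|=0$, i.e.\ to $q\in N$, which fails for nonzero $q\leq p$.

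For hypotheses (1) and (2) of Theorem~\ref{dualitygen} I would argue by predensity, which is where the given genericity is converted into a fact usable for an arbitrary quotient-generic. Given a maximal antichain $W\in M$ of $j(\mathbb{P})\restriction j(p)$, I claim $\{[b]_K : b\in W\}$ is predense in $(j(\mathbb{P})\restriction j(p))/K$: if $c\leq j(p)$ with $\|c\in\dot{\hat H}\|\neq 0$ were incompatible with every $[b]_K$, then $c\wedge b\in K$ for all $b$, yet forcing $\mathbb{S}$ below a condition in $\|c\in\dot{\hat H}\|$ puts $c$, hence $j(p)$, into $\hat H$; as $\hat H$ is $M$-generic, its restriction to the cone below $j(p)$ meets $W$, producing $b$ with $c\wedge b\in\hat H$ and contradicting $\|c\wedge b\in\dot{\hat H}\|=0$. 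Predensity then forces any reconstructed $\hat H=\{b:[b]_K\in h\}$ to meet $W$, giving hypothesis (1). Hypothesis (2) is parallel, taking instead a maximal antichain $A_0\in V$ of $\mathbb{P}\restriction p$ and the set $\{[j(q)]_K : q\in A_0\}$, and using that $c\in\hat H$ forces $p\in j^{-1}[\hat H]=H$, whose $V$-genericity makes $H$ meet $A_0$.

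The main obstacle I anticipate is exactly the verification of hypothesis (2): the naive attempt to meet $j[A_0]$ through $M$-genericity fails, since $\hat H$ only meets the possibly larger antichain $j(A_0)\in M$ rather than the pointwise image $j[A_0]$. The predensity argument circumvents this precisely by routing through the $V$-genericity of $H$ that (1) provides — it is this input that is \emph{not} available in general, and whose presence is what the equivalence is meant to characterize. Some care is also needed to confirm that each reconstructed $\hat H$ is a genuine generic ultrafilter (the filter and ultrafilter properties are routine consequences of $h$ being an ultrafilter on the quotient, while genericity is the content of predensity) and that passing to a single condition legitimately forces all the statements invoked in the boolean-value computations.
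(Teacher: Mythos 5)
Your argument follows the paper's proof essentially step for step: the $(2)\Rightarrow(1)$ direction via Theorem~\ref{dualitygen} and Proposition~\ref{ultequal}, and for $(1)\Rightarrow(2)$ the same ideal $K=\{b : \|b\in\dot{\hat H}\|=0\}$, the same choice of $p$ from the non-density of the kernel, and the same genericity-versus-antichain verifications of the three hypotheses. The one point needing care is that the kernel $N$ must be computed in $V$ using the forcing relation of the full iteration $\p(A)/I * \dot{\mathbb{S}}$ (as the paper does with $\|j(p)\in\dot H_0\|_{\p(A)/I*\dot{\mathbb Q}}$), since if $N$ is read off from $\mathbb{S}$-boolean values inside a fixed $V[G]$ it could in principle be dense there without contradicting the $V$-genericity of $H$, which is generic over $V$ but not over $V[G]$.
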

\begin{proof}
$(2) \Rightarrow (1)$ is trivial.  To show $(1) \Rightarrow (2)$, let $\dot{\mathbb Q}$ be a $\p(Z)/I$-name for a partial order, and suppose $A \in I^+$ and $\dot{H}_0$ are such that $\Vdash_{\p(A)/I * \dot{\mathbb Q}}$ ``$\dot{H}_0$ is $j(\mathbb P)$-generic over $M$ and $j^{-1}[\dot{H}_0]$ is $\mathbb P$-generic over $V$.''  By the genericity of $j^{-1}[\dot{H}_0]$, the set of $p \in \mathbb P$ such that $\Vdash_{\p(A)/I * \dot{\mathbb Q}} j(p) \notin \dot{H}_0$ is not dense.  So let $p_0$ be such that for all $p \leq p_0$, $|| j(p) \in \dot{H}_0 || \not = 0$.  In $V^{\p(A)/I}$, define an ideal $K$ on $j(\mathbb P \restriction p_0)$ by $K = \{ p \in j(\mathbb P \restriction p_0) : 1 \Vdash_\mathbb{Q} p \notin \dot{H}_0 \}$.  We claim $K$ satisfies the hypotheses of Theorem \ref{dualitygen}.  Let $G * h$ be $\p(A)/I * j(\mathbb P \restriction p_0)/K$-generic.  In $V[G*h]$, let $\hat{H} = \{ p \in j(\mathbb{P}) : [p]_K \in h \}$.

\begin{enumerate}[(1)]
\item If $D \in M$ is open and dense in $j(\mathbb{P})$, then $\{ [d]_K : d \in D$ and $d \notin K \}$ is dense in $j(\mathbb{P})/K$.  For otherwise, there is $p \in j(\mathbb{P}) \setminus K$ such that $p \wedge d \in K$ for all $d \in D$.  By the definition of $K$, we can force with $\mathbb{Q}$ over $V[G]$ to obtain a filter $H_0 \subseteq j(\mathbb{P})$ with $p \in H_0$.  But $H_0$ cannot contain any elements of $D$, so it is not generic over $M$, a contradiction.  Thus if $h \subseteq j(\mathbb{P})/K$ is generic over $V[G]$, then $\hat{H}$ is $j(\mathbb{P})$-generic over $M$.
\item If $A \in V$ is a maximal antichain in $\mathbb{P}$, then $\{ [j(a)]_K : a \in A$ and $j(a) \notin K \}$ is a maximal antichain in $j(\mathbb{P})/K$.  For otherwise, there is $p \in j(\mathbb{P}) \setminus K$ such that $p \wedge j(a) \in K$ for all $a \in A$.  We can force with $\mathbb{Q}$ over $V[G]$ to obtain a filter $H_0 \subseteq j(\mathbb{P})$ with $p \in H_0$.  But $H_0$ cannot contain any elements of $j[A]$, so $j^{-1}[H_0]$ is not generic over $V$, a contradiction.
\item If $p \in \mathbb{P} \restriction p_0$, $|| j(p) \in \dot{H}_0 ||_{\p(A)/I * \dot{\mathbb Q}} \not=0$, so $1 \nVdash_{\p(Z)/I} j(p) \in K$.
\end{enumerate}
\end{proof}

\begin{lemma}
\label{generated}
Suppose the ideal $K$ in Theorem~\ref{dualitygen} is forced to be principal.  Let $\dot{m}$ be such that $\Vdash_{\p(Z)/I} \dot{K} = \{ p \in j(\mathbb{P}) : p \leq \neg \dot{m} \}$.  Suppose $f$ and $A$ are such that $A \Vdash \dot{m} = [f]$, and $\dot{B}$ is a $\mathbb{P}$-name for $\{ z \in A : f(z) \in H \}$.  Let $\bar{I}$ be the ideal generated by $I$ in $V[H]$.  Then $\bar{I} \restriction B = J \restriction B$, where $J$ is given by Theorem~\ref{dualitygen}.
\end{lemma}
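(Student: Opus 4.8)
The plan is to prove the two inclusions $\bar I \restriction B \subseteq J \restriction B$ and $J \restriction B \subseteq \bar I \restriction B$ separately, noting that only the second uses the principality of $K$. For the first, suppose $C \subseteq B$ lies in $\bar I$, so that $C \subseteq D$ for some $D \in I$ in $V$. Then $\hat{j}(C) \subseteq \hat{j}(D) = j(D)$, and since $[D]_I = 0$ we have $1 \Vdash_{\p(Z)/I} [id]_M \notin j(D)$, hence $1 \Vdash [id]_M \notin \hat{j}(C)$ in the quotient $(\p(Z)/I * j(\mathbb{P})/K)/e[H]$; by the defining formula for $J$ this gives $C \in J$. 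This direction records that $J$ always extends $\bar I$ and needs nothing about $K$; if $\bar I$ is taken closed under the completeness of $I$ rather than mere downward closure, it still goes through since $J$ has the same completeness as $I$ by Proposition~\ref{ultequal}.

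For the reverse inclusion I would argue by contradiction: assume $C \subseteq B$ with $C \in \bar{I}^+$ but $C \in J$, and fix a $\mathbb{P}$-name $\dot{C}$ and $p_0 \in H$ forcing ``$\dot{C} \subseteq \dot{B}$, $\dot{C} \in \bar{I}^+$, and $\dot{C} \in J$.'' Let $g(z) = ||z \in \dot{C}||_{\mathbb{P}} \in \mathbb{P}$ be the support function, so $z \in \dot{C}^H \Leftrightarrow g(z) \in H$, and $p_0 \Vdash \dot C \subseteq \dot B$ gives $g(z) \wedge p_0 \leq f(z)$ on $A$. Here the principality of $K$ enters decisively: since $\dot{K} = \{ p : p \leq \neg \dot{m} \}$ and $A \Vdash \dot{m} = [f]$, the quotient $j(\mathbb{P})/K$ below $A$ is just $j(\mathbb{P}) \restriction m$, with $[p]_K \in h \Leftrightarrow p \wedge m \in h$. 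Using $\hat{j}(\dot{C}^H) = j(\dot{C})^{\hat H}$ together with \L o\'{s}'s theorem, the relation $[id]_M \in \hat{j}(C)$ reduces (below $A \in G$) to the single Boolean value $[g]_G = j(g)([id]_M) \leq m$ lying in $\hat{H}$, i.e. to $[g]_G \in h$. Thus no additional genericity from $j(\mathbb{P})/K$ can destroy positivity — which is exactly why the claim should fail for a general $K$.

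Carrying this through the isomorphism $\iota$, I would compute $\iota(p_0,\dot{C}) = e(p_0) \wedge || [id]_M \in \hat{j}(\dot{C}) ||$ and identify it, in $\p(Z)/I * (j(\mathbb{P}) \restriction m)$, with the condition whose first coordinate is $A$ and whose second coordinate is $[g]_G \wedge j(p_0) = [\, z \mapsto g(z) \wedge p_0 \,]_G$. Hence $\iota(p_0,\dot{C}) \neq 0$ exactly when the set $S_{p_0} = \{ z \in A : g(z) \wedge p_0 > 0 \}$, which lies in $V$, is $I$-positive. The main obstacle is the bookkeeping linking $S_{p_0}$ back to $\bar I$: because $p_0 \Vdash \dot{C} \subseteq \check{S}_{p_0}$, were $S_{p_0} \in I$ we would get $p_0 \Vdash \dot{C} \in \bar{I}$, contradicting $p_0 \Vdash \dot{C} \in \bar{I}^+$; so $S_{p_0} \in I^+$ and $\iota(p_0,\dot{C}) \neq 0$. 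But $p_0 \Vdash \dot{C} \in J$ means $(p_0,\dot{C}) = 0$ in $\mathbb{P} * \dot{\p(Z)/J}$, whence $\iota(p_0,\dot{C}) = 0$ — a contradiction. Therefore $C \in J^+$, completing the reverse inclusion and hence the lemma.
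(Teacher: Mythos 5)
Your proposal is correct and follows essentially the same route as the paper: both hinge on the support function $z \mapsto \|z \in \dot C\|$, the $I$-positive set $\{z \in A : p_0 \wedge \|z\in\dot C\| \wedge f(z) \neq 0\}$ (your $S_{p_0}$, using $p_0 \wedge g(z) \le f(z)$, is the paper's $C'$), and a \L o\'{s} computation producing a generic in which $p_0 \in H$ and $[id]_M \in \hat{j}(C)$. The only difference is presentational — you run it as a contradiction through the isomorphism $\iota$, while the paper argues density below each $p_1 \le p_0$ directly.
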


\begin{proof}
Clearly $J \supseteq \bar{I}$.  Suppose that $p_0 \Vdash$ ``$\dot{C} \subseteq \dot{B}$ and $\dot{C} \in \bar{I}^+$,'' and let $p_1 \leq p_0$ be arbitrary.  WLOG $\mathbb{P}$ is a complete boolean algebra.  For each $z \in Z$, let $b_z = || z \in \dot{C} ||$.  In $V$, define $C' = \{ z : p_1 \wedge b_z \wedge f(z) \not= 0 \}$.  $p_1 \Vdash \dot{C} \subseteq C'$, so $C' \in I^+$.  If $G \subseteq \p(Z)/I$ is generic with $C' \in G$, then $j(p_1) \wedge b_{[id]} \wedge \dot{m} \not= 0$.  Take $\hat{H} \subseteq j(\mathbb{P})$ generic over $V[G]$ with $j(p) \wedge b_{[id]} \wedge \dot{m} \in \hat{H}$.  Since $b_{[id]} \Vdash^M_{j(\mathbb{P})} [id] \in \hat{j}(C)$, $p_1 \nVdash \dot{C} \in J$ as $p_1 \in H = j^{-1}[\hat{H}]$.  Thus $p_0 \Vdash \dot C \in J^+$.
\end{proof}

\begin{corollary}
\label{dualitynicecase}
If $I$ is a $\kappa$-complete precipitous ideal on $Z$ and $\mathbb{P}$ is $\kappa$-c.c.,\ then there is a canonical isomorphism $\iota : \mathbb{P} * \p(Z)/ \bar{I} \cong \p(Z)/I * j(\mathbb{P})$.
\end{corollary}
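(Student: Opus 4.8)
The plan is to apply Theorem~\ref{dualitygen} in the degenerate case where the ideal $K$ is forced to be trivial, specifically $\dot K = \{0\}$, so that $j(\mathbb P)/K = j(\mathbb P)$ and the quotient forcing disappears. In the language of Lemma~\ref{generated}, taking $K$ trivial corresponds to choosing the generator $\dot m = 1_{j(\mathbb P)}$. Once the hypotheses of Theorem~\ref{dualitygen} are verified, that theorem produces a $\mathbb P$-name $J$ and an isomorphism $\iota : \mathcal B(\mathbb P * \dot{\p(Z)/J}) \cong \mathcal B(\p(Z)/I * j(\mathbb P))$, and it remains only to identify $J$ with $\bar I$.

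To verify the three hypotheses with $K = \{0\}$, observe that $\hat H$ coincides with the $j(\mathbb P)$-generic filter $h$ itself. Hypothesis (1) is immediate: since $M \subseteq V[G]$, any filter generic over $V[G]$ for $j(\mathbb P)$ is a fortiori generic over $M$. Hypothesis (3) is likewise immediate, since for nonzero $p$ elementarity gives $j(p) \neq 0$, whence $j(p) \notin \{0\} = K$. The substantive point is hypothesis (2), the genericity of $j^{-1}[\hat H]$ over $V$, and this is exactly where the two assumptions combine. Because $I$ is $\kappa$-complete, the generic embedding satisfies $\crit(j) \geq \kappa$; because $\mathbb P$ is $\kappa$-c.c., every maximal antichain $A \subseteq \mathbb P$ in $V$ has $|A| < \kappa \leq \crit(j)$, so $j(A) = j[A]$ and $j(A)$ is a maximal antichain of $j(\mathbb P)$ in $M$ by elementarity. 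Genericity of $\hat H$ over $M$ then forces $j(a) \in \hat H$ for some $a \in A$, so $a \in j^{-1}[\hat H]$; combined with the fact that $j$ preserves $\wedge$ and $\leq$ (so that $j^{-1}[\hat H]$ is a filter), this shows $j^{-1}[\hat H]$ is $\mathbb P$-generic over $V$.

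Finally, to see $J = \bar I$, I would invoke Lemma~\ref{generated} directly with $\dot m = 1$, taking $f$ to be the constant function with value $1_{\mathbb P}$ and $A = Z$, so that $A \Vdash \dot m = [f]$. The associated name $\dot B$ for $\{z \in A : f(z) \in H\}$ is then forced to equal $Z$, since $1_{\mathbb P} \in H$ always. Lemma~\ref{generated} yields $\bar I \restriction Z = J \restriction Z$, that is $\bar I = J$, and substituting into the isomorphism above gives the desired $\iota : \mathbb P * \p(Z)/\bar I \cong \p(Z)/I * j(\mathbb P)$. I expect the main obstacle to lie in hypothesis (2): one must confirm that $\kappa$-completeness of $I$ genuinely delivers $\crit(j) \geq \kappa$ for the generic ultrapower, and hence $j(A) = j[A]$ for antichains of size ${<}\kappa$, since this is the only place where the hypotheses of the corollary are used in an essential way.
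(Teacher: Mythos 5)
Your proposal is correct and follows essentially the same route as the paper: take $\dot K$ to be the trivial ideal, check the hypotheses of Theorem~\ref{dualitygen} (with hypothesis (2) coming from $\crit(j)\geq\kappa$ plus the $\kappa$-c.c.\ giving $j[A]=j(A)$ for maximal antichains), and then identify $J$ with $\bar I$ via Lemma~\ref{generated}. The only difference is that you spell out the instantiation $\dot m=1$, $f\equiv 1_{\mathbb P}$, $B=Z$ explicitly, which the paper leaves implicit.
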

\begin{proof}
If $G * \hat{H} \subseteq \p(Z)/I * j(\mathbb{P})$ is generic, then for any maximal antichain $A \subseteq \mathbb{P}$ in $V$, $j[A] = j(A)$, and $M \models j(A)$ is a maximal antichain in $j(\mathbb{P})$.  Thus $j^{-1}[\hat{H}]$ is $\mathbb{P}$-generic over $V$, and clearly for each $p \in \mathbb{P}$, we can take $\hat{H}$ with $j(p) \in \hat{H}$.  Taking a $\p(Z)/I$-name $\dot K$ for the trivial ideal on $j(\mathbb P)$, Theorem~\ref{dualitygen} implies that there is a $\mathbb P$-name $\dot{J}$ for an ideal on $Z$ and an isomorphism $\iota : \mathcal{B}(\mathbb{P} * \p(Z) / J) \to \mathcal{B}(\p(Z)/I * j(\mathbb{P}))$, and Proposition~\ref{generated} implies that $J = \bar{I}$.
\end{proof}

\section{Mutual inconsistency}

\begin{lemma}
\label{satequiv}
Suppose $I$ is a normal ideal on $Z \subseteq \p(X)$ and $|Z| = |X|$.  The following are equivalent:
\begin{enumerate}[(1)]
\item $I$ is $|X|^+$-saturated.
\item Every normal $J \supseteq I$ on $Z$ is equal to $I \restriction A$ for some $A \subseteq Z$.
\item If $[A]_I \Vdash \tau \in V^Z/G$, then there is some function $f : Z \to V$ in $V$ such that $[A]_I \Vdash \tau = [\check f]_G$.
\end{enumerate}
\end{lemma}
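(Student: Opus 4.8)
The plan is to fix $\lambda = |X|$ and prove the three conditions equivalent by showing $(1)\Leftrightarrow(3)$ and $(1)\Leftrightarrow(2)$, treating saturation as the hub. Throughout I use two standing facts. First, as is standard for normal ideals on subsets of $\mathcal{P}(X)$, $I$ is fine, so $\{z : x\notin z\}\in I$ for each $x\in\bigcup Z$; this is exactly what lets diagonal unions absorb unions of size $\le\lambda$ even though $I$ need not be $\lambda^+$-complete. Second, for $f,g\in V$ and $B\in I^+$ one has $[B]_I\Vdash [\check f]_G=[\check g]_G$ iff $B\setminus\{z:f(z)=g(z)\}\in I$, and every element of $V^Z/G$ is of the form $[\check g]_G$ for some $g\in V$. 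For $(1)\Rightarrow(3)$: given $[A]_I\Vdash\tau\in V^Z/G$, the conditions forcing $\tau=[\check g]_G$ for some $g\in V$ are dense below $[A]_I$, so I take a maximal antichain $\{[B_\alpha]_I:\alpha<\delta\}$ below $[A]_I$ with witnesses $g_\alpha$, where $\delta\le\lambda$ by saturation. I glue the $g_\alpha$ into one $f$ by fixing an injection $\alpha\mapsto x_\alpha\in X$ and setting $f(z)=g_\alpha(z)$ for the least $\alpha$ with $x_\alpha\in z$ and $z\in B_\alpha$. The set of $z\in B_\alpha$ on which $f$ disagrees with $g_\alpha$ is contained in the diagonal union $\nabla_\beta(B_\alpha\cap B_\beta)$ of the $I$-sets $B_\alpha\cap B_\beta$ $(\beta<\alpha)$, hence lies in $I$ by normality; with fineness this gives $[B_\alpha]_I\Vdash[\check f]=[\check g_\alpha]=\tau$, so $[A]_I\Vdash\tau=[\check f]$.

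For $(3)\Rightarrow(1)$, I argue directly: supposing $I$ is not $\lambda^+$-saturated, fix an antichain $\{[A_\alpha]_I:\alpha<\lambda^+\}$ and let $\dot\tau$ name the ordinal $j(\alpha)$, where $\alpha$ is the unique index with $A_\alpha\in G$ (and $\dot\tau=0$ off the antichain). Since $\dot\tau$ is forced to be an ordinal, hence in $V^Z/G$, condition $(3)$ yields a single $f\in V$ with $1\Vdash\dot\tau=[\check f]$. The decisive point is to name $j(\alpha)$ rather than the raw index $\alpha$: naming $j(\alpha)$ means $[A_\alpha]_I\Vdash[\check f]=j(\check\alpha)=[\check c_\alpha]$ for the constant function $c_\alpha$, so the forcing computation gives $A_\alpha\subseteq f^{-1}(\{\alpha\})$ mod $I$. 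The sets $f^{-1}(\{\alpha\})$ are pairwise disjoint and each $I$-positive, hence nonempty, producing $\lambda^+$ pairwise disjoint nonempty subsets of the size-$\lambda$ set $Z$, a contradiction.

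For $(1)\Rightarrow(2)$: given a normal $J\supseteq I$, take a maximal antichain $\{[B_\alpha]_I:\alpha<\delta\}$ among the nonzero elements of the ideal $J/I$ in $\mathcal{P}(Z)/I$, so $\delta\le\lambda$ by saturation. Indexing into $X$ and forming $D=\nabla_\alpha B_\alpha\in J$ (normality of $J$), fineness shows $B_\alpha\subseteq D$ mod $I$, so maximality forces $C\subseteq D$ mod $I$ for every $C\in J$; thus $D$ is $\subseteq$-largest in $J$ mod $I$ and $J=I\restriction(Z\setminus D)$. For $(2)\Rightarrow(1)$ I use contraposition: from an antichain $\{A_\alpha:\alpha<\lambda^+\}$ define $J=\{C\subseteq Z:|\{\alpha:C\cap A_\alpha\in I^+\}|\le\lambda\}$. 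This is readily an ideal extending $I$, and its normality follows from that of $I$, since if each $C_x\in J$ then $\nabla_x C_x\cap A_\alpha\in I^+$ forces some $C_x\cap A_\alpha\in I^+$, whence the bad index set for $\nabla_x C_x$ is a union of $\le\lambda$ sets of size $\le\lambda$, still of size $\le\lambda$ because $|X|=\lambda$. Each $A_\alpha\in J$, yet no $D\in J$ contains all of them mod $I$ (that would make $\{\alpha:D\cap A_\alpha\in I^+\}=\lambda^+$), so $J$ has no largest element mod $I$ and is not of the form $I\restriction A$.

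The obstacles are concentrated in the two implications involving $(3)$. In $(3)\Rightarrow(1)$ the subtlety, and the natural first attempt that fails, is naming the raw index $\alpha$: because the generic embedding moves ordinals, a single representing function cannot recover $\alpha$ on an $I$-positive set, and the repair is precisely to name $j(\alpha)$, i.e.\ to compare with the constant functions $c_\alpha$. In $(1)\Rightarrow(3)$ and $(1)\Rightarrow(2)$ the real work is carrying out the gluing and the covering \emph{without} $\lambda^+$-completeness, which is exactly where fineness, closure under diagonal unions, and the hypothesis $|Z|=|X|$ (so that families of size $\le\lambda$ can be indexed by $X$) all enter.
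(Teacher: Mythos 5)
Your proof is correct and, for the equivalence of (1) and (2), follows essentially the same route as the paper: in one direction a maximal antichain in $J \cap I^+$ whose diagonal union is the largest element of $J$ modulo $I$, and in the other an explicitly built normal ideal extending $I$ with no largest element, obtained from an antichain of size $|X|^+$. For the two implications involving (3) the paper gives no argument at all, citing Propositions 2.12, 2.23 and Remark 2.13 of Foreman's handbook chapter; your disjointification argument for $(1)\Rightarrow(3)$ and your argument producing $|X|^+$ pairwise disjoint $I$-positive subsets of $Z$ for $(3)\Rightarrow(1)$ (the one place where $|Z|=|X|$ is used, exactly as the paper remarks) are precisely the standard proofs of those cited facts, so there is no substantive divergence.
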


\begin{proof}
We only use $|Z| = |X|$ for $(3) \Rightarrow (1)$.  To show $(1) \Rightarrow (2)$, suppose $I$ is $|X|^+$-saturated.  Let $\{ A_x : x \in X \}$ be a maximal antichain in $J \cap I^+$.  Then $[\nabla A_x]$ is the largest element of $\p(Z)/I$ whose elements are in $J$.  Thus $J = I \restriction (Z \setminus \nabla A_x)$.  For $(2) \Rightarrow (1)$, suppose $I$ is not $|X|^+$-saturated, and let $\{ A_\alpha : \alpha < \delta \}$ be a maximal antichain where $\delta \geq |X|^+$.  Let $J$ be the ideal generated by $\bigcup \{ \Sigma_{\alpha \in Y} [A_\alpha] : Y \in \p_{|X|^+}(\delta) \}$.  Then $J$ is a normal, proper ideal extending $I$.  $J$ cannot be equal $I \restriction A$ for some $A \in I^+$ because if so, there is some $\alpha$ where $A \cap A_\alpha \in I^+$.  $A \cap A_\alpha \in J$ by construction, but every $I$-positive subset of $A$ is $(I \restriction A)$-positive.

For the implication $(1) \Rightarrow (3)$, see Propositions 2.12 and 2.23 of \cite{foremanhandbook}.  For $(3) \Rightarrow (1)$ under the assumption $|Z| = |X|$, use Remark 2.13.
\end{proof}

Suppose $\kappa = \mu^+$ and $I$ is a normal, $\kappa$-complete, $\lambda^+$-saturated ideal on $Z \subseteq \p_\kappa(\lambda)$.  If $j : V \to M \subseteq V[G]$ is a generic embedding arising from $I$, then by \L o\'{s}'s theorem, $[id] < j(\kappa)$, and thus $|\lambda| = \mu$ in $V[G]$.  Since $\lambda^+$ is preserved, $j(\kappa) = \lambda^+$.  Now suppose $\mathbb P$ is a $\kappa$-c.c.\ partial order.  By Corollary~\ref{dualitynicecase}, $\mathbb P$ preserves the $\lambda^+$-saturation of $I$ just in case $\p(Z)/I$ forces $j(\mathbb P)$ is $\lambda^+$-c.c.  The notion of a \emph{c.c.c.-indestructible} ideal on $\omega_1$ has been considered before, for example in \cite{bt2} and \cite{fms1}, and a straightforward generalization would be to say that for successor cardinals $\kappa$, a normal $\kappa$-complete $\lambda^+$-saturated ideal on $Z \subseteq \p_\kappa(\lambda)$ is indestructible if its saturation is preserved by every $\kappa$-c.c.\ forcing.  For the next lemma however, we consider the dual notion by fixing a $\kappa$-c.c.\ partial order and quantifying over saturated ideals on a set $Z$.

\begin{definition}
$\mathbb P$ is $Z$-absolutely $\kappa$-c.c.\ when for all normal, $|\bigcup Z|^+$-saturated ideals $I$ on $Z$, $1 \Vdash_{\p(Z)/I} j(\mathbb P)$ is $j(\kappa)$-c.c.
\end{definition} 

Such partial orders abound; for example $\add(\omega,\alpha)$ is $Z$-absolutely c.c.c.\ for all $Z$ and $\alpha$, since Cohen forcing has an absolute definition.  Also, if $\kappa = \mu^+$ and $Z$ is such that every normal ideal on $Z$ is $\kappa$-complete, then all $\mu$-centered or $\mu$-c.c.\ partial orders are $Z$-absolutely $\kappa$-c.c.

Proof of the following basic lemma is left to the reader:
\begin{lemma}Suppose $e : \mathbb{P} \to \mathbb{Q}$ is a regular embedding and $\kappa$ is a cardinal.  $\mathbb{Q}$ is $\kappa$-dense iff $\mathbb{P}$ is $\kappa$-dense and $\Vdash_{\mathbb{P}} \mathbb{Q} / \dot{G}$ is $\kappa$-dense.
\end{lemma}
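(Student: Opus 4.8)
The plan is to use the factorization afforded by the regular embedding, namely $\mathbb{Q} \cong \mathcal{B}(\mathbb{P} * \dot{\mathbb{R}})$ where $\dot{\mathbb{R}} = \mathbb{Q}/\dot{G}$, and to compute densities on each side. Since density is unchanged under passing to the separative quotient and then the boolean completion, I would first reduce to the case where $\mathbb{P}$ and $\mathbb{Q}$ are complete boolean algebras and $e$ is a complete embedding with projection $\pi : \mathbb{Q} \to \mathbb{P}$, $\pi(q) = \prod \{ p : q \leq e(p) \}$, satisfying $q \leq e(\pi(q))$, $\pi \circ e = \mathrm{id}$, $\pi$ monotone, $\pi(q) \neq 0$ for $q \neq 0$, and with $\mathbb{Q}/\dot{G}$ having underlying set $\{ q \in \mathbb{Q} : \pi(q) \in \dot{G} \}$. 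Throughout I take $\kappa$ infinite, the finite case being trivial since the orders are then atomic.

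For the direction $(\Rightarrow)$, I would fix $D \subseteq \mathbb{Q}$ dense with $|D| \leq \kappa$ and prove both conjuncts at once. That $\mathbb{P}$ is $\kappa$-dense is witnessed by $\pi[D]$: given $p \neq 0$, choose $q \in D$ with $q \leq e(p)$, whence $0 \neq \pi(q) \leq \pi(e(p)) = p$. For the quotient, I would check that the name $\dot{D}' = \{ q \in D : \pi(q) \in \dot{G} \}$, which has size $\leq \kappa$, is forced dense in $\mathbb{Q}/\dot{G}$: if $q_0 \in \mathbb{Q}/G$ then $\pi(q_0) \in G$, and for any $p \leq \pi(q_0)$ one can pick $q \in D$ below $q_0 \wedge e(p)$, so that $q \leq q_0$ and $\pi(q) \leq p$; by genericity such a $q$ lands in $\dot{D}'$ below $q_0$.

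The substance is $(\Leftarrow)$, which amounts to the general fact that $\den(\mathbb{P} * \dot{\mathbb{R}}) \leq \kappa$ whenever $\den(\mathbb{P}) \leq \kappa$ and $\Vdash_{\mathbb{P}} \den(\dot{\mathbb{R}}) \leq \kappa$. I would fix a dense $D_0 \subseteq \mathbb{P}$ with $|D_0| \leq \kappa$, and for each $p \in D_0$ fix a name $\dot{D}_1^p = \{ \dot{d}^p_\xi : \xi < \kappa \}$ with $p \Vdash$ ``$\dot{D}_1^p$ is dense in $\dot{\mathbb{R}}$.'' Then I would set
\[ E = \{ (p', \dot{d}^p_\xi) : p, p' \in D_0,\ p' \leq p,\ \xi < \kappa \}, \]
so that $|E| \leq \kappa \cdot \kappa = \kappa$, and verify density: given $(p^*, \dot{r}^*)$, choose $p \in D_0$ with $p \leq p^*$; since $p$ forces a member of $\dot{D}_1^p$ below $\dot{r}^*$, choose $p'' \leq p$ and $\xi$ with $p'' \Vdash \dot{d}^p_\xi \leq \dot{r}^*$, then $p' \in D_0$ with $p' \leq p''$; now $(p', \dot{d}^p_\xi) \in E$ lies below $(p^*, \dot{r}^*)$. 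Transporting $E$ across $\mathcal{B}(\mathbb{P} * \dot{\mathbb{R}}) \cong \mathbb{Q}$ gives a dense subset of $\mathbb{Q}$ of size $\leq \kappa$.

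The hard part will be exactly the bookkeeping that keeps $|E| \leq \kappa$ rather than comparable to $|\mathbb{Q}|$: one cannot simply take all pairs $(p, \check{q})$ deciding a quotient element, because a $\mathbb{P}$-name for a single element of $\dot{\mathbb{R}}$ can a priori involve an antichain of size $> \kappa$. The control must instead come from fixing, for each $p$ in the size-$\leq\kappa$ set $D_0$, one name $\dot{D}_1^p$ for a $\kappa$-sized dense subset of the quotient and forming pairs, together with the trick of closing $E$ off under shrinking the first coordinate within $D_0$, so that the generically obtained witness $p''$ never pushes us outside $E$. Everything else is routine verification.
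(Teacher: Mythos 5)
The paper offers no proof of this lemma (it is explicitly ``left to the reader''), so there is nothing to compare against; your argument is the standard one and is correct, including the key point in the $(\Leftarrow)$ direction that one must fix, for each $p$ in a fixed dense $D_0 \subseteq \mathbb{P}$ of size $\leq \kappa$, a single $\kappa$-indexed family of names for a dense subset of the quotient and then close $E$ under shrinking the first coordinate within $D_0$. The one caveat is your opening reduction: the claim that density is unchanged by passing to the separative quotient and Boolean completion is false for general posets ($\den(\mathcal{B}(\mathbb{P})) \leq \den(\mathbb{P})$ can be strict, e.g.\ for $\omega_1$ with the reversed ordering, whose completion is trivial), so strictly speaking you have proved the lemma for separative $\mathbb{P}$ and $\mathbb{Q}$ --- which is all the paper ever needs, since every application is to quotients of Boolean algebras.
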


\begin{lemma}
\label{nodense}
Suppose $\kappa = \mu^+$ and $Z \subseteq \{ z \in \p_\kappa(\lambda) : z \cap \kappa \in \kappa \}$.  Suppose $\mathbb{P}$ is $Z$-absolutely $\kappa$-c.c.\ and $\kappa \leq \den(\mathbb{P} \restriction p) \leq \lambda$ for all $p \in \mathbb{P}$.  Then in $V^\mathbb{P}$, there are no normal $\lambda$-dense ideals on $Z$.
\end{lemma}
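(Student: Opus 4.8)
The plan is to argue by contradiction. Suppose $H \subseteq \mathbb{P}$ is generic and that in $V[H]$ there is a normal $\lambda$-dense ideal $J^*$ on $Z$. I would first extract from $J^*$ a ground-model ideal to which the duality machinery applies. Let $I = J^* \cap V$. Since $J^*$ is closed under diagonal unions in $V[H]$ and the diagonal union of a sequence lying in $V$ again lies in $V$, the trace $I$ is a normal ideal on $Z$ in $V$. Because $J^*$ is $\lambda$-dense it is $\lambda^+$-c.c., so any $V$-antichain of $I$-positive sets is a $J^*$-antichain (each such set, lying in $V \setminus J^*$, is $J^*$-positive) and hence has size $\leq \lambda$; thus $I$ is $\lambda^+ = |\bigcup Z|^+$-saturated, and in particular precipitous. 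Finally $I$ is $\kappa$-complete: since every $z \in Z$ satisfies $z \cap \kappa \in \kappa$, normality (which gives $[id] = j[\bigcup Z]$) forces $[id] \cap j(\kappa) = j[\kappa]$ to be an ordinal by \L o\'{s}'s theorem, and this is possible only if $\crit(j) \geq \kappa$.

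Next I would invoke the duality machinery. First, $\mathbb{P}$ is $\kappa$-c.c.\ in $V$: otherwise a $V$-antichain $\{ p_\alpha : \alpha < \kappa \}$ would map under $j$ to an antichain $j(\{p_\alpha\})$ of $M$-cardinality $j(\kappa) = \lambda^+$ in $j(\mathbb{P})$, contradicting that our $I$ witnesses the $Z$-absolute $\kappa$-c.c.\ of $\mathbb{P}$. Hence Corollary~\ref{dualitynicecase} gives a canonical isomorphism $\iota : \mathbb{P} * \p(Z)/\bar{I} \cong \p(Z)/I * j(\mathbb{P})$, where $\bar{I}$ is the ideal generated by $I$ in $V[H]$, and by Proposition~\ref{ultequal} $\bar{I}$ is normal in $V[H]$. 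Since $I$ is $\lambda^+$-saturated and $\p(Z)/I$ forces $j(\mathbb{P})$ to be $j(\kappa) = \lambda^+$-c.c.\ (again using that $I$ witnesses the $Z$-absolute $\kappa$-c.c.), the right-hand side is $\lambda^+$-c.c.; transporting this across $\iota$, and noting that a two-step iteration can be $\lambda^+$-c.c.\ only if its tail is forced to be, we conclude that $\mathbb{P}$ forces $\p(Z)/\bar{I}$ to be $\lambda^+$-c.c., i.e.\ $\bar{I}$ is $\lambda^+$-saturated in $V[H]$. As $I \subseteq J^*$ we have $\bar{I} \subseteq J^*$, so Lemma~\ref{satequiv}, implication $(1) \Rightarrow (2)$ (whose proof does not use $|Z| = |X|$), yields some $A \in \bar{I}^+$ with $J^* = \bar{I} \restriction A$.

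The contradiction then comes from a density computation. From $\kappa \leq \den(\mathbb{P} \restriction p) \leq \lambda$ and the $\kappa$-c.c.\ of $\mathbb{P}$, a maximal antichain of size $< \kappa$ together with $\leq \lambda$-sized dense sets below its members shows $\mathbb{P}$ is $\lambda$-dense. I would pass to a condition $p_0 \in H$ forcing $\p(Z)/\bar{I} \restriction [A]$ to be $\lambda$-dense (legitimate since $J^* = \bar{I} \restriction A$ is $\lambda$-dense in $V[H]$), and set $b^* = \iota(p_0, [A])$. Restricting the isomorphism below these conditions, the basic density lemma applied to the regular embedding $\mathbb{P} \hookrightarrow \mathbb{P} * \p(Z)/\bar{I}$ shows the left side below $(p_0, [A])$ is $\lambda$-dense; hence so is the right side below $b^*$. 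Writing $b^* \leq ([C], \dot{q})$ and applying the basic density lemma to $\p(Z)/I \hookrightarrow \p(Z)/I * j(\mathbb{P})$, we obtain that $\p(Z)/I$ forces $j(\mathbb{P}) \restriction \dot{q}$ to be $\lambda$-dense over $V[G]$. But in $M$, elementarity gives $\den(j(\mathbb{P}) \restriction q) \geq j(\kappa) = \lambda^+$ for every condition $q$; and if $j(\mathbb{P}) \restriction q$ had a dense subset $D$ of size $\leq \lambda$ in $V[G]$, then $D$ would be a $\lambda$-sequence of elements of $M$, hence $D \in M$ by the closure $M^\lambda \cap V[G] \subseteq M$, contradicting $\den^M(j(\mathbb{P}) \restriction q) \geq \lambda^+$. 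Thus $j(\mathbb{P}) \restriction q$ is nowhere $\lambda$-dense in $V[G]$, the desired contradiction.

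I expect the crux to be this final density transfer. The lower bound $\den(j(\mathbb{P}) \restriction q) \geq \lambda^+$ is immediate only \emph{inside} $M$, where it follows from elementarity applied to $\den(\mathbb{P} \restriction p) \geq \kappa$; the entire argument hinges on invoking $M^\lambda \cap V[G] \subseteq M$ to prevent the larger model $V[G]$ from adding a small dense set and collapsing that bound. Equally delicate is the bookkeeping of the localization, so that the $\lambda$-density of $J^* = \bar{I} \restriction A$ in $V[H]$ is correctly pushed through $\iota$ into a statement about $j(\mathbb{P})$ below a single condition in $V[G]$. Once those are in hand, the remainder is a routine application of the duality isomorphism of Corollary~\ref{dualitynicecase} and the saturation dichotomy of Lemma~\ref{satequiv}.
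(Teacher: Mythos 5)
Your argument follows the paper's proof almost step for step (trace the ideal to the ground model, apply Corollary~\ref{dualitynicecase} and Proposition~\ref{ultequal} to get $J^* = \bar I \restriction A$ via Lemma~\ref{satequiv}, then contradict $\lambda$-density by pushing it through $\iota$ and the regular-embedding density lemma into a statement about $j(\mathbb P)$), and your final density transfer is just the contrapositive of the paper's. Two of your elaborations are genuinely worth having: the explicit use of $M^\lambda \cap V[G] \subseteq M$ to rule out a small dense subset of $j(\mathbb P)\restriction q$ appearing in $V[G]$ (the paper asserts ``nowhere $<j(\kappa)$-dense'' without comment), and the observation that $Z$-absolute $\kappa$-c.c.\ plus the existence of the ideal forces $\mathbb P$ itself to be $\kappa$-c.c., which Corollary~\ref{dualitynicecase} needs.

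There is, however, a genuine gap at the very first step. You set $I = J^* \cap V$ and assert that this is ``a normal ideal on $Z$ \emph{in} $V$.'' Your justification (closure under diagonal unions of $V$-sequences) shows the trace has the right closure properties, but not that it is an \emph{element} of $V$ --- and in general the trace of an ideal in a generic extension is not a ground-model set. (For instance, partition $Z$ into positive pieces $\langle S_n : n<\omega\rangle$ in $V$ and extend an ideal by throwing in exactly those $S_n$ with $c(n)=0$ for a Cohen real $c$; the trace then computes $c$.) Nothing in your hypotheses --- normality, $\lambda$-density, $\kappa$-c.c.\ of $\mathbb P$ --- rules this out, and everything downstream collapses without $I \in V$: you cannot form $\p(Z)/I$ in $V$, take the generic ultrapower $j : V \to M$, or invoke Corollary~\ref{dualitynicecase}. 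The paper sidesteps this precisely by defining $I = \{ X \subseteq Z : p \Vdash X \in \dot J \}$ from a name $\dot J$ and a fixed condition $p$, which is manifestly a definition carried out in $V$; with $p \in H$ one still has $I \subseteq J^*$, and your saturation and completeness arguments go through for this $I$ essentially unchanged. So the repair is routine, but as written the proof does not stand.
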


\begin{proof}Suppose $p \Vdash \dot{J}$ is a normal, fine, $\kappa$-complete, $\lambda^+$-saturated ideal on $Z$.  Let $I = \{ X \subseteq Z : p \Vdash X \in \dot{J} \}$.  It is easy to check that $I$ is normal.  The map $\sigma : \p(Z) / I \to \mathcal{B}(\mathbb{P} \restriction p * \p(Z) / \dot{J})$ that sends $X$ to $( || \check{X} \in \dot{J}^+ || , \dot{[X]_J} )$ is an order-preserving and antichain-preserving map, so $I$ is $\lambda^+$-saturated.

Let $H$ be $\mathbb{P}$-generic over $V$ with $p \in H$.  By Corollary~\ref{dualitynicecase}, $\p^{V[H]} (Z) / \bar{I} \cong (\p^V (Z) / I * j(\mathbb{P})) / e[H]$, where $e$ is the regular embedding from Theorem~\ref{dualitygen}, so $\bar I$ is $\lambda^+ = j(\kappa)$-saturated by the $Z$-absolute $\kappa$-c.c.   By Proposition~\ref{ultequal}, $\bar{I}$ is normal, and clearly $\bar{I} \subseteq J$.  By Lemma~\ref{satequiv}, there is $A \in \bar{I}^+$ such that $J = \bar{I} \restriction A$.  Since $j(\mathbb{P})$ is forced to be nowhere $<j(\kappa)$-dense, $\p (Z) / I * j(\mathbb{P})$ is nowhere $\lambda$-dense.  Since $\den(\mathbb{P}) \leq \lambda$, $(\p^V (Z) / I * j(\mathbb{P})) / e[H]$ is nowhere $\lambda$-dense.  Thus $J$ is not $\lambda$-dense. %$\square$
\end{proof}

We can now prove the mutual inconsistency of the following hypotheses when $\mu^+ = \kappa < \lambda = \eta^+$:

\begin{enumerate}[(1)]
\item There is a $\kappa$-complete, $\kappa$-dense ideal $I$ on $\kappa$.
\item There is a normal, $\kappa$-complete, $\lambda$-absolutely $\lambda$-saturated $J$ ideal on $[\lambda]^\kappa$, such that $\den(\p(A)/J) = \lambda$ for all $A \in J^+$.
\end{enumerate}

Solovay observed that if there is a $\kappa$-complete, $\kappa^+$-saturated ideal on $\kappa$, then there is a normal one as well.  This also applies to dense ideals.  For suppose $I$ is a $\kappa$-complete, $\kappa$-dense ideal on some set $Z$.  Let $J = \{ X \subseteq \kappa : 1 \Vdash_{\p(Z)/I} \kappa \notin j(X) \}$.  It is easy to verify that $J$ is normal and that the map $X \mapsto || \kappa \in j(X) ||$ is an order and antichain preserving map of $\p(\kappa)/J$ into $\p(Z)/I$.  If $\{ X_\alpha : \alpha < \kappa \}$ is a maximal antichain in $\p(\kappa)/I$, then $[\nabla X_\alpha]_J = [\kappa]_J$, so $1 \Vdash_{\p(Z)/I} (\exists \alpha < \kappa) \kappa \in j(X_\alpha)$.  Thus $\{ e(X_\alpha) : \alpha < \kappa \}$ is maximal in $\p(Z)/I$ and $J$ is $\kappa$-dense.

Suppose (1) and (2) hold.  If $j : V \to M \subseteq V[G]$ is a generic embedding arising from $\p([\lambda]^\kappa) / J$, then $j(\kappa) = \lambda$ and $M^\lambda \cap V[G] \subseteq M$.  $M$ thinks $j(I)$ is a normal, $\lambda$-dense ideal on $\lambda$, and this holds in $V[G]$ as well by the closure of $M$.  But since the forcing to produce $G$ is $\lambda$-absolutely $\lambda$-c.c.\ and of uniform density $\lambda$, Lemma~\ref{nodense} implies that no such ideals can exist in $V[G]$, a contradiction.

Under GCH, this part of the result can be strengthened slightly.  Suppose $\lambda^{<\lambda} = \lambda$ and $\mathbb P$ is $\lambda$-c.c.  If $\theta$ is sufficiently large, then we can take an elementary $M \prec H_\theta$ with $\mathbb P \in M$, $|M| = \lambda$, $M^{<\lambda} \subseteq M$.  Then any maximal antichain in $\mathbb P \cap M$ is a member of $M$, and by elementarity is also a maximal antichain in $\mathbb P$.  It is clear from the definition that in general, if $\mathbb P$ is $Z$-absolutely $\kappa$-c.c., then so is any regular suborder.  So if $\mathbb P$ is also nowhere $<\lambda$-dense and $\lambda$-absolutely $\lambda$-c.c., forcing with $\mathbb P \cap M$ will destroy all dense ideals on $\lambda$.

To see that the same holds for $\mathbb P$, suppose $\dot J$ is a $\mathbb P$-name for a normal ideal on $\lambda$ and $D = \{ \dot A_\alpha : \alpha < \lambda \}$ is a sequence of names for subsets of $\lambda$ witnessing that $\dot J$ is $\lambda$-dense.  Let $I = \{ X \subseteq \lambda : 1 \Vdash_{\mathbb P} X \in \dot J \}$.  As in the proof of Lemma~\ref{nodense}, $I$ is a normal $\lambda^+$-saturated ideal on $\lambda$, and it is forced that $\bar I = \dot J \restriction \dot A$ for some $\dot A \in \dot J^+$.  Let $\theta$ be sufficiently large, and take $M \prec H_\theta$ with $|M| = \lambda$, $\lambda \subseteq M$, $M^{<\lambda} \subseteq M$, $\{ \mathbb P, \dot J, \dot B, D \} \in M$.  We may assume that each $\dot A_\alpha$ and $\dot B$ take the form $\cup_{\beta < \lambda} S_\beta \times \{ \check \beta \}$, where each $S_\beta$ is a maximal antichain, so that they are all $\mathbb P \cap M$-names.

Let $G \subseteq \mathbb P$ be generic and let $G_0 = G \cap M$.  Let $I_0$ be the ideal generated by $I$ in $V[G_0]$, and let $I_1$ be the ideal generated by $I$ in $V[G]$.  In $V[G_0]$, if $C$ is an $I_0$-positive subseteq of $\dot B^{G_0}$, then in $V[G]$, there is some $A_\alpha$ such that $A_\alpha \setminus C \in J$.  But this just means that some $Y \in I$ covers $A_\alpha \setminus C$, and this is absolute to $V[G_0]$.  Thus $I_0 \restriction B$ is $\lambda$-dense in $V[G_0]$, contradicting Lemma~\ref{nodense}.  Thus we have:

\begin{proposition}
If $\mu^+ = \kappa < \lambda = \eta^+$ and $2^\eta = \lambda$, then there cannot exist both a $\kappa$-complete, $\kappa$-dense ideal on $\kappa$ and a normal, $\kappa$-complete, $\lambda$-absolutely $\lambda$-saturated, nowhere $\eta$-dense ideal on $[\lambda]^\kappa$.
\end{proposition}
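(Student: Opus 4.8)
The plan is to use the generic ultrapower coming from $J$ to produce a normal $\lambda$-dense ideal on $\lambda$ in the extension and then to contradict it by showing that $\mathbb{P}=\p([\lambda]^\kappa)/J$ destroys all such ideals, the role of $2^\eta=\lambda$ being to upgrade Lemma~\ref{nodense} from the uniform-density case treated above to the present nowhere-$\eta$-dense case. First I would normalize the ideal on $\kappa$: by Solovay's observation recorded above, the $\kappa$-complete $\kappa$-dense ideal yields a normal $\kappa$-complete $\kappa$-dense ideal $I$ on $\kappa$, so assume $I$ is normal. Let $G$ be $\mathbb{P}$-generic and $j:V\to M\subseteq V[G]$ the induced embedding. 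By the background Proposition and \L o\'{s}'s theorem, $\crit(j)=\kappa$, $j(\kappa)=\lambda$, and $M^\lambda\cap V[G]\subseteq M$. By elementarity $M$ believes $j(I)$ is a normal $\lambda$-dense ideal on $\lambda$, and since $M^\lambda\cap V[G]\subseteq M$ gives $\p(\lambda)^{V[G]}=\p(\lambda)^M$, the quotient $\p(\lambda)/j(I)$ and a witnessing dense family of size $\lambda$ are computed identically in $M$ and in $V[G]$. Thus $j(I)$ is genuinely a normal $\lambda$-dense ideal on $\lambda$ in $V[G]$, and it suffices to show that $\mathbb{P}$ forces there to be no such ideal.

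The obstruction to invoking Lemma~\ref{nodense} directly is that nowhere-$\eta$-density only delivers $\den(\mathbb{P}\restriction p)\ge\lambda$, whereas the lemma (with $\lambda,\eta,\lambda$ in the roles of $\kappa,\mu,Z$) also needs $\den(\mathbb{P}\restriction p)\le\lambda$, and the quotient may be far larger. Here I would use $2^\eta=\lambda$, hence $\lambda^{<\lambda}=\lambda$, to descend to a regular suborder of size $\lambda$. Fix $\theta$ large and take $M^\ast\prec H_\theta$ with $\mathbb{P}\in M^\ast$, $\lambda\subseteq M^\ast$, $|M^\ast|=\lambda$, and $(M^\ast)^{<\lambda}\subseteq M^\ast$. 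Closure puts every maximal antichain of $\mathbb{P}\cap M^\ast$ into $M^\ast$, where elementarity makes it maximal in $\mathbb{P}$, so $\mathbb{P}\cap M^\ast$ is a regular suborder; it is again $\lambda$-absolutely $\lambda$-c.c.\ (regular suborders are), has density $\le\lambda$ by size, and remains nowhere $<\lambda$-dense, since a dense subset of $(\mathbb{P}\cap M^\ast)\restriction p$ of size $<\lambda$ would by $<\lambda$-closure lie in $M^\ast$ and hence, by elementarity, be dense in $\mathbb{P}\restriction p$. Thus Lemma~\ref{nodense} applies to $\mathbb{P}\cap M^\ast$, so forcing with $\mathbb{P}\cap M^\ast$ destroys every normal $\lambda$-dense ideal on $\lambda$.

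The last step, which I expect to be the main obstacle, is to transfer this from $\mathbb{P}\cap M^\ast$ to the full $\mathbb{P}$, since $j(I)$ lives only in $V[G]$ and not in $V[G\cap M^\ast]$. Suppose toward a contradiction that a condition forces $\dot J$ to be a normal $\lambda$-dense ideal on $\lambda$ witnessed by names $D=\{\dot A_\alpha:\alpha<\lambda\}$, and set $I_\ast=\{X\subseteq\lambda:1\Vdash_{\mathbb P}X\in\dot J\}$. As in the proof of Lemma~\ref{nodense}, $I_\ast$ is normal and $\lambda^+$-saturated, and by Corollary~\ref{dualitynicecase} together with the fact that $\mathbb{P}$ is $\lambda$-absolutely $\lambda$-c.c., its generated ideal $\bar{I}_\ast$ is forced to be $\lambda^+$-saturated; hence Lemma~\ref{satequiv} forces $\dot J=\bar{I}_\ast\restriction\dot B$ for some positive $\dot B$. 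I would choose $M^\ast$ to contain $\dot J,\dot B,D$ as well, and put each of $\dot B$ and the $\dot A_\alpha$ into the form $\bigcup_{\beta<\lambda}S_\beta\times\{\check\beta\}$ with the $S_\beta$ maximal antichains, so that all are $\mathbb{P}\cap M^\ast$-names. For $G$ generic, set $G_0=G\cap M^\ast$, which is $\mathbb{P}\cap M^\ast$-generic, and let $I_0$ be the ideal generated by $I_\ast$ in $V[G_0]$. The crux is an absoluteness argument: if $C\subseteq B$ is $I_0$-positive in $V[G_0]$ but in $V[G]$ some $A_\alpha\setminus C$ belongs to $J=\bar{I}_\ast\restriction B$, then $A_\alpha\setminus C$ is covered by a ground-model member of $I_\ast$, and this covering is already visible in $V[G_0]$. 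Reading the witnesses $D$ off in $V[G_0]$ then shows that $I_0\restriction B$ is $\lambda$-dense there, contradicting the previous paragraph. The delicate point is exactly this bookkeeping: keeping the density witnesses legible in $V[G_0]$ while certifying $I_0$-density there from $J$-density that holds only in $V[G]$, which is where the normal form of the names and the $<\lambda$-closure of $M^\ast$ do the real work.
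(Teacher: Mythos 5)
Your proposal is correct and follows essentially the same route as the paper: reduce to a normal $\kappa$-dense ideal on $\kappa$, use the generic ultrapower of $J$ to place a normal $\lambda$-dense ideal on $\lambda$ in $V[G]$, pass to a $<\lambda$-closed elementary submodel to obtain a regular suborder of size $\lambda$ to which Lemma~\ref{nodense} applies, and then transfer back to the full quotient via the normal-form names $\bigcup_{\beta<\lambda}S_\beta\times\{\check\beta\}$ and the absoluteness of covering by ground-model members of $I_\ast$ between $V[G_0]$ and $V[G]$. All the key points, including the role of $2^\eta=\lambda$ and the definition of $I_\ast$, match the paper's argument.
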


Now we turn to the weakly inaccessible case.  Suppose that $\kappa = \mu^+$, there is a $\kappa$-dense normal ideal $I$ on $\kappa$, $\lambda > \kappa$ is weakly inaccessible, and there is a normal, $\kappa$-complete, $\lambda^+$-saturated ideal $J$ on $[\lambda]^\kappa$.  If $j : V \to M \subseteq V[G]$ is an embedding arising from $J$, then $M \models j(I)$ is a normal, $\lambda$-dense ideal on $\lambda$, and $V[G]$ satisfies the same by the closure of $M$.  In $V$, we may define a normal ideal $I^\prime = \{ X \subseteq \lambda : 1 \Vdash_{\p([\lambda]^\kappa)/J} X \in j(I) \}$.  As before, the map $X \mapsto ( || \check{X} \in j(I)^+ || , \dot{[X]_{j(I)}} )$ preserves antichains, implying $I^\prime$ is $\lambda^+$-saturated.

Now since $\lambda$ is weakly inaccessible in $V$, whenever $i : V \to M$ is an embedding arising from $I^\prime$, $i(\lambda) > \lambda^+$ since $M$ has the same $\lambda^+$ and thinks $i(\lambda)$ is a limit cardinal.  By Lemma~\ref{satequiv}, for each $\alpha \leq \lambda^+$, there is a function $f_\alpha : \lambda \to \lambda$ such that $1 \Vdash_{\p(\lambda)/I^\prime} \check \alpha = [f_\alpha]$.  This means that for $\alpha < \beta \leq \lambda^+$, $\{ \gamma : f_\alpha(\gamma) \geq f_\beta(\gamma) \} \in I^\prime$.

In $V[G]$, $I^\prime \subseteq j(I)$.  If $H \subseteq \p(\lambda)/j(I)$ is generic over $V[G]$, and $k : V[G] \to N$ is the associated embedding, then $N \models [f_\alpha]_H < [f_\beta]_H < k(\lambda)$ for $\alpha < \beta \leq \lambda^+$.  Thus the ordertype of $k(\lambda)$ is greater than $\lambda^+$.  But since $j(I)$ is $\lambda^+$-saturated and $\lambda = \mu^+$ in $V[G]$, $k(\lambda) = \lambda^+$, a contradiction.

The $\kappa$-density of the ideal on $\kappa$ was only relevant to have a saturation property of an ideal on $\lambda$ that is upwards-absolute to a model with the same $\p(\lambda)$. Furthermore, if we replace the ideal $J$ on $[\lambda]^\kappa$ by one on $\{ z \subseteq \lambda^+ : \ot(z \cap \lambda) = \kappa \}$, then we get enough closure of the generic ultrapower to derive a contradiction from simply a $\kappa^+$-saturated ideal on $\kappa$.  (This kind of ideal on $\p(\lambda^+)$ will also be shown individually consistent from a large cardinal assumption in the next section.)  The common thread is captured by the following:

\begin{proposition}
\label{inaccincon}
Suppose $\kappa$ is a successor cardinal, $\lambda > \kappa$ is a limit cardinal, and $Z$ is such that $\lambda \subseteq \bigcup Z$, and for all $z \in Z$, $z \cap \kappa \in \kappa$ and $\ot(z \cap \lambda) = \kappa$.  Then the following are mutually inconsistent:
\begin{enumerate}[(1)]
\item There is a $\kappa$-complete, $Z$-absolutely $\kappa^+$-saturated ideal on $\kappa$.
\item There is a normal $\lambda^+$-saturated ideal on $Z$.
\end{enumerate}
\end{proposition}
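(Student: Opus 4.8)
The plan is to run the argument that immediately precedes the statement for the weakly inaccessible case, but to replace the density of the ideal on $\kappa$ (which was used only to make the image ideal $\lambda$-dense, hence upward-absolutely saturated) by the $Z$-absolute $\kappa^+$-saturation hypothesis, which is exactly the device that keeps $j(I_0)$ saturated in the outer model. So I would fix a $\kappa$-complete, $Z$-absolutely $\kappa^+$-saturated ideal $I_0$ on $\kappa$ from (1) and a normal $\lambda^+$-saturated ideal $J$ on $Z$ from (2), with $\kappa=\mu^+$, and let $j:V\to M\subseteq V[G]$ be a generic ultrapower arising from $J$. By Solovay's observation I may assume $I_0$ is normal: the normal ideal it produces has quotient forcing regularly embedding into $\p(\kappa)/I_0$, and the text's remark that regular suborders inherit the $Z$-absolute $\kappa^+$-c.c.\ keeps the hypothesis intact. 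I would then compute $j(\kappa)=\lambda$: normality gives $[id]_G=j[\bigcup Z]$, so as $\lambda\subseteq\bigcup Z$ the part of $[id]$ below $j(\lambda)$ is exactly $j[\lambda]$, and applying \L o\'{s} to $z\mapsto\ot(z\cap\lambda)\equiv\kappa$ yields $j(\kappa)=\ot([id]\cap j(\lambda))=\ot(j[\lambda])=\lambda$. The condition $z\cap\kappa\in\kappa$ gives $\crit(j)=\kappa$, so $\lambda=j(\mu)^+$ is a successor cardinal in $M$. Finally, $\lambda^+$-saturation of $J$ yields $M^\lambda\cap V[G]\subseteq M$ (the amalgamation-of-$\lambda$-many-values argument behind the opening Proposition needs only that antichains have size $\leq\lambda$, so it applies to these general $Z$), whence $\p^{V[G]}(\lambda)=\p^M(\lambda)$ and $(\lambda^+)^M=(\lambda^+)^{V[G]}=\lambda^+$.

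With $I_0$ normal, $j(I_0)$ is a normal ideal on $\lambda$ in $M$, hence in $V[G]$ by the agreement of power sets. Applying the $Z$-absolute $\kappa^+$-c.c.\ of $\p(\kappa)/I_0$ to $J$ (legitimate since $\lambda\subseteq\bigcup Z$ makes $J$ also $|\bigcup Z|^+$-saturated) gives $1\Vdash_{\p(Z)/J} j(\p(\kappa)/I_0)$ is $j(\kappa^+)$-c.c., and $j(\kappa^+)=(\lambda^+)^M=\lambda^+$, so $V[G]\models j(I_0)$ is $\lambda^+$-saturated. I then pull back to $V$, setting $I'=\{X\subseteq\lambda: 1\Vdash_{\p(Z)/J} X\in j(I_0)\}$. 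As in the inaccessible case the map $X\mapsto(\,||\check X\in j(I_0)^+||,\dot{[X]_{j(I_0)}}\,)$ embeds $\p(\lambda)/I'$ antichain-preservingly into the $\lambda^+$-c.c.\ iteration $\p(Z)/J * \dot{\p(\lambda)/j(I_0)}$, so $I'$ is $\lambda^+$-saturated; normality and $\lambda$-completeness of $I'$ (hence $\crit(i)=\lambda$ for any $i$ arising from $I'$) follow from normality of $j(I_0)$ together with the $\lambda$-closure of $M$.

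The use of $\lambda$ being a limit cardinal enters next. For any generic $i:V\to M'$ from $I'$, $\lambda^+$-saturation gives $(M')^\lambda\cap V[\bar G]\subseteq M'$ and so $(\lambda^+)^{M'}=\lambda^+$, while elementarity makes $i(\lambda)$ a limit cardinal of $M'$ above $\lambda$; since the only $M'$-cardinal in $(\lambda,\lambda^+]$ is the successor cardinal $\lambda^+$, this forces $i(\lambda)>\lambda^+$. As $\lambda^+<i(\lambda)$ is forced, Lemma~\ref{satequiv} supplies for each $\alpha\leq\lambda^+$ a single $f_\alpha:\lambda\to\lambda$ in $V$ with $1\Vdash_{\p(\lambda)/I'}\check\alpha=[f_\alpha]$, whence $\{\gamma:f_\alpha(\gamma)\geq f_\beta(\gamma)\}\in I'$ for all $\alpha<\beta\leq\lambda^+$. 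In $V[G]$ we have $I'\subseteq j(I_0)$, so the $f_\alpha$ are $<_{j(I_0)}$-increasing; forcing with $\p(\lambda)/j(I_0)$ over $V[G]$ to obtain $k:V[G]\to N$ produces a strictly increasing sequence $\langle [f_\alpha]_H:\alpha\leq\lambda^+\rangle$ below $k(\lambda)$, so $k(\lambda)>\lambda^+$. But $j(I_0)$ is a normal $\lambda^+$-saturated ideal on the successor cardinal $\lambda$ of $V[G]$, and for such ideals $k(\lambda)=\lambda^+$, a contradiction.

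I expect the main obstacle to be the bookkeeping around normality. The final contradiction genuinely requires $j(I_0)$ to be \emph{normal}: a merely $\lambda^+$-saturated ideal on $\lambda$ could have $k(\lambda)>\lambda^+$, in which case the scale would contradict nothing. This is why the reduction of $I_0$ to a normal ideal, and the verification that it preserves the $Z$-absolute $\kappa^+$-c.c., must be handled with care, and why I read ``$Z$-absolutely $\kappa^+$-saturated'' as carrying ordinary $\kappa^+$-saturation so that Solovay's normalization applies. A secondary point needing attention is that the closure $M^\lambda\cap V[G]\subseteq M$, which underwrites both $\p^{V[G]}(\lambda)=\p^M(\lambda)$ and the preservation of $\lambda^+$, holds for all the sets $Z$ permitted here (including $Z\subseteq\p(\lambda^+)$) and not merely for $Z\subseteq\p(\lambda)$.
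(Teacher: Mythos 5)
Your proof is correct and follows essentially the same route as the paper, which leaves Proposition~\ref{inaccincon} without a separate proof because it is meant to be read off from the two preceding arguments: you correctly substitute the $Z$-absolute $\kappa^+$-c.c.\ for the upward-absoluteness of density, pull back $j(I_0)$ to a normal $\lambda^+$-saturated $I'$ on $\lambda$ in $V$, use that $i(\lambda)>\lambda^+$ since $\lambda$ is a limit cardinal, and derive the contradiction from the increasing $\lambda^+$-sequence of functions against $k(\lambda)=\lambda^+$. The extra details you supply (Solovay normalization of $I_0$ with preservation of the absoluteness hypothesis via regular suborders, and the closure argument for general $Z$ with $\lambda\subseteq\bigcup Z$) are exactly the points the paper glosses over, and they are handled correctly.
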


%\begin{proposition}
%\begin{enumerate}[(1)]
%\item It is inconsistent to have both a $\kappa$-complete, $\kappa$-centered ideal on $\kappa$, and a normal, $\kappa$-complete, $\lambda^+$-saturated ideal on $[\lambda]^\kappa$, where $\kappa$ is a successor cardinal and $\lambda$ is a limit cardinal.
%\item It is inconsistent to have both a $\kappa$-complete, $\kappa^+$-saturated ideal on $\kappa$, and a normal, $\kappa$-complete, $\lambda^+$-saturated ideal on $\{ z \subseteq \lambda^+ : \ot(z \cap \lambda) = \kappa \}$, where $\kappa$ is a successor cardinal and $\lambda$ is a limit cardinal.
%\end{enumerate}
%\end{proposition}

\section{Consistency}

First we introduce a simple partial order.  The basic idea is due to Shioya~\cite{shioyanew}.  If $\mu < \kappa$ are in $\reg$ (the class of regular cardinals), we define:
\[ \mathbb P(\mu,\kappa) = \prod^{<\mu \supp}_{\alpha \in (\mu,\kappa) \cap \reg} \col(\alpha,< \! \kappa) \]
The ordering is reverse inclusion.  It is a bit more convenient to view $\mathbb P(\mu,\kappa)$ as the collection of partial functions with domain contained in $\kappa^3$ such that:
\begin{itemize}
\item $\forall (\alpha,\beta,\gamma) \in \dom p$, $\alpha$ is regular, $\mu < \alpha < \kappa$, $\gamma < \alpha$, and $p(\alpha,\beta,\gamma)< \beta$.
\item $| \{ \alpha : \exists \beta \exists \gamma (\alpha,\beta,\gamma) \in \dom p \} | < \mu$.
\item $\forall \alpha | \{ (\beta,\gamma) : (\alpha,\beta,\gamma) \in \dom p \} | < \alpha$.
%\item $\forall (\alpha,\beta,\gamma) \in \dom p$, $p(\alpha,\beta,\gamma) < \beta$
\end{itemize}
Proof of the next two lemmas is standard.

\begin{lemma}
If $\mu$ is regular and $\kappa > \mu$ is inaccessible, then $\mathbb P(\mu,\kappa)$ is $\kappa$-c.c., $\mu$-closed, and of size $\kappa$.
\end{lemma}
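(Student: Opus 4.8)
The plan is to verify the three assertions separately, with the $\kappa$-chain condition being the substantial one. Throughout I view a condition $p$ as a partial function on $\kappa^3$ subject to the three bullet constraints, and I record two derived objects: its \emph{level support} $a_p = \{\alpha : \exists\beta\,\exists\gamma\,(\alpha,\beta,\gamma)\in\dom p\}$, which has size $<\mu$, and for each $\alpha\in a_p$ its \emph{column} $\{(\beta,\gamma):(\alpha,\beta,\gamma)\in\dom p\}$, which has size $<\alpha$. Since $|a_p|<\mu<\kappa$ and each column has size $<\alpha<\kappa$, regularity of $\kappa$ gives $|\dom p|<\kappa$ and a bound $\delta_p<\kappa$ on all ordinals appearing in $\dom p$.

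For the size, the observation that every condition is supported below some $\delta<\kappa$ lets me write $\mathbb{P}(\mu,\kappa)=\bigcup_{\delta<\kappa}\mathbb{P}_\delta$, where $\mathbb{P}_\delta$ collects the conditions with $\dom p\subseteq\delta^3$. A direct count using that $\kappa$ is a strong limit shows $|\mathbb{P}_\delta|<\kappa$: there are $\delta^{<\mu}<\kappa$ possible level supports, and for each level $\alpha<\delta$ fewer than $\kappa$ possible columns. Hence $|\mathbb{P}(\mu,\kappa)|\le\kappa$, and the reverse inequality is immediate, giving size exactly $\kappa$. For $\mu$-closure I take a decreasing sequence $\langle p_\xi:\xi<\delta\rangle$ with $\delta<\mu$ and propose $p=\bigcup_\xi p_\xi$ as a lower bound. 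The only thing to check is that $p$ is a condition: its level support is a union of $<\mu$ sets of size $<\mu$, hence of size $<\mu$ by regularity of $\mu$; and for each $\alpha$ in it, the column is an increasing union of $\delta<\mu<\alpha$ sets of size $<\alpha$, hence of size $<\alpha$ by regularity of $\alpha$.

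The heart is the $\kappa$-c.c., which I will establish by an iterated $\Delta$-system argument exploiting both inaccessibility and the column constraint. Given $\{p_i:i<\kappa\}$, I first thin out using the $\Delta$-system lemma applied to the level supports $a_i$ (sets of size $<\mu<\kappa$) to obtain a subfamily of size $\kappa$ whose level supports form a $\Delta$-system with root $A^*$; since $|A^*|<\mu$ and $\kappa$ is regular, $\alpha^*:=\sup A^*<\kappa$. Outside $A^*$ the level supports are pairwise disjoint, so any incompatibility must occur at a level in $A^*$. I then collect, for each surviving $i$, the projection $b_i=\{(\alpha,\beta):\alpha\in A^*\text{ and }\exists\gamma\,(\alpha,\beta,\gamma)\in\dom p_i\}$; because each column has size $<\alpha\le\alpha^*$ and $|A^*|<\mu$, every $b_i$ has size below the fixed cardinal $\mu\cdot\alpha^*<\kappa$, so a second application of the $\Delta$-system lemma (legitimate since $\kappa$ is a strong limit) yields a further subfamily of size $\kappa$ whose $b_i$'s form a $\Delta$-system with root $B^*$ of size $<\kappa$. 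Now any two surviving conditions can share a domain point only over $B^*$, so their only possible disagreements lie in the restrictions $p_i\restriction\{(\alpha,\beta,\gamma):(\alpha,\beta)\in B^*\}$. The number of such restrictions is $<\kappa$ by one more strong-limit count, so by regularity two of the $\kappa$ surviving conditions have identical restriction over $B^*$; these agree on their entire common domain and are therefore compatible, contradicting that the $p_i$ formed an antichain.

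The main obstacle is this c.c.\ argument: a raw $\Delta$-system lemma cannot be applied to the full domains $\dom p_i$, since these may have size approaching $\kappa$ and $|\xi|^{<\kappa}=\kappa$ defeats the lemma's hypothesis. The device that rescues the argument is to peel off the two bounded layers of the support in turn — first the level support (bounded by $\mu$), then the level-by-level projection (bounded by the now-fixed $\mu\cdot\alpha^*$) — so that each $\Delta$-system application is run on sets of size below a fixed cardinal $<\kappa$, where strong-limitness keeps the relevant cardinal powers below $\kappa$. The column constraint $<\alpha$ is exactly what keeps the second layer bounded and what finally makes the count of possible root-restrictions stay below $\kappa$.
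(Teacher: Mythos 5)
Your proof is correct; the paper omits this proof entirely (``Proof of the next two lemmas is standard''), and your two-stage $\Delta$-system argument---first on the level supports of size $<\mu$, then on the projected columns over the root, using inaccessibility to keep each counting step below $\kappa$---is exactly the standard argument being alluded to, and matches the flavor of the $\Delta$-system argument the paper does spell out later for the absolute $\lambda$-c.c.\ in Theorem~4.5. The size and $\mu$-closure verifications are likewise correct.
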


\begin{lemma}If $\mu \leq \kappa \leq \lambda$, then there is a projection  $\sigma : \mathbb P(\mu,\lambda) \to \mathbb P(\mu,\kappa)$ given by $\sigma(p) = p \restriction \kappa^3$.
\end{lemma}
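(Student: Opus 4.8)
The plan is to verify directly, from the description of both posets as partial functions on triples, the three conditions defining a projection: preservation of the maximum element, order-preservation, and the lifting property. First I would check that $\sigma$ is well defined, i.e.\ that $p\restriction\kappa^3\in\mathbb P(\mu,\kappa)$ whenever $p\in\mathbb P(\mu,\lambda)$. This is immediate, since restricting the domain to $\kappa^3$ only shrinks the relevant support sets, and every surviving triple $(\alpha,\beta,\gamma)$ already satisfies $\alpha$ regular, $\gamma<\alpha$ and $p(\alpha,\beta,\gamma)<\beta$, and now also $\mu<\alpha<\kappa$ because $\alpha<\kappa$. For the same reason every $q\in\mathbb P(\mu,\kappa)$ is itself a condition of $\mathbb P(\mu,\lambda)$ fixed by $\sigma$, so $\sigma$ is even surjective. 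Order-preservation is direct from the reverse-inclusion ordering: $p\le p'$ means $p\supseteq p'$, whence $p\restriction\kappa^3\supseteq p'\restriction\kappa^3$ and $\sigma(p)\le\sigma(p')$; and $\sigma(\emptyset)=\emptyset$ preserves the top element.

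The substantive step is the lifting property: given $p\in\mathbb P(\mu,\lambda)$ and $q\le\sigma(p)$ in $\mathbb P(\mu,\kappa)$, I must produce $p'\le p$ with $\sigma(p')\le q$. The natural candidate is $p'=p\cup q$. Since $q\le\sigma(p)=p\restriction\kappa^3$ unwinds to $q\supseteq p\restriction\kappa^3$, the functions $p$ and $q$ agree on $\dom p\cap\dom q\subseteq\dom p\cap\kappa^3$, so $p\cup q$ is a genuine partial function extending $p$; and because $\dom q\subseteq\kappa^3$ we get $(p\cup q)\restriction\kappa^3=(p\restriction\kappa^3)\cup q=q$, i.e.\ $\sigma(p')=q$. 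It then remains only to confirm that $p'$ really is a condition of $\mathbb P(\mu,\lambda)$.

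Checking $p'=p\cup q\in\mathbb P(\mu,\lambda)$ is where the only genuine care is needed, and I regard it as the main obstacle. The coordinatewise constraints are inherited, since each triple of $\dom p'$ comes from $p$ or from $q$, and the triples of $q$ satisfy $\alpha<\kappa\le\lambda$. The global support of $p'$ is the binary union of the supports of $p$ and $q$, each of size $<\mu$, hence again of size $<\mu$. The point to watch is the fiber over a fixed $\alpha<\kappa$: here $p$ may contribute triples $(\alpha,\beta,\gamma)$ with $\beta\ge\kappa$, collapsing cardinals in $[\kappa,\lambda)$ down to $\alpha$, which are invisible to $q$; thus the $\alpha$-fiber of $p'$ is the union of the $\alpha$-fibers of $q$ and of $p$, each of size $<\alpha$, so a binary union of two sets of size $<\alpha$, which is still of size $<\alpha$ (for $\alpha\ge\kappa$ the fiber is unchanged from $p$). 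This shows $p'\in\mathbb P(\mu,\lambda)$, and together with $p'\le p$ and $\sigma(p')=q$ it establishes the lifting property, completing the proof that $\sigma$ is a projection.
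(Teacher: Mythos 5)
Your proof is correct: the paper omits this lemma's proof as ``standard,'' and your direct verification---well-definedness of $\sigma$, order- and top-preservation, and the lifting property via the amalgam $p' = p \cup q$---is precisely that standard argument. The one point needing genuine care, namely that the fiber of $p \cup q$ over each regular $\alpha < \kappa$ still has size $<\alpha$ even though $p$ may contribute triples $(\alpha,\beta,\gamma)$ with $\beta \geq \kappa$ invisible to $q$, is exactly the point you isolate and handle correctly.
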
 

Suppose that there is elementary embedding $j : V \to M$ with $\crit(j) = \kappa$ and $M^\lambda \subseteq M$.  If $\lambda = j(\kappa)$, then $\kappa$ is called \emph{huge}.  A slightly stronger hypothesis is that $\lambda = j(\kappa)^+$.  This is substantially weaker than a \emph{2-huge} cardinal, which is when $\lambda = j^2(\kappa)$.  The hugeness of $\kappa$ with target $\lambda$ is equivalent to the existence of a normal $\kappa$-complete ultrafilter on $[\lambda]^\kappa$, and a shorthand for this is ``$\kappa$ is $\lambda$-huge.''  The slightly stronger hypothesis is equivalent to the existence of a normal $\kappa$-complete ultrafilter on $\{ z \subseteq \lambda^+ : \ot(z \cap \lambda) = \kappa \}$.  The next result shows the relative consistency of the type of ideals mentioned in (1) of Theorem~\ref{incon1} and in the discussion preceding Proposition~\ref{inaccincon}.

\begin{proposition}
\label{inacccon}
Suppose that $\lambda> \kappa$ and there is a $\kappa$-complete normal ultrafilter on $Z$ such that $\forall z \in Z (\ot(z \cap \lambda) = \kappa)$. If $\mu< \kappa$ is regular and $G \subseteq \mathbb P(\mu,\kappa)$ is generic, then in $V[G]$, $\kappa = \mu^+$, $\lambda$ is inaccessible, and there is a normal, $\kappa$-complete, $\lambda$-saturated ideal $J$ on $Z$, such that $\p(Z)/J \cong \mathbb P(\mu,\lambda)/G$.
\end{proposition}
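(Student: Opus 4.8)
The plan is to obtain $J$ as precisely the ideal $\bar I$ delivered by Corollary~\ref{dualitynicecase}, applied to the huge embedding coming from the hypothesized ultrafilter and to the partial order $\mathbb P = \mathbb P(\mu,\kappa)$. First I would let $U$ be the normal $\kappa$-complete ultrafilter on $Z$ and $j : V \to M \cong \mathrm{Ult}(V,U)$ the induced embedding, so that $\crit(j) = \kappa$, $j(\kappa) = \lambda$ (since $\ot(z \cap \lambda) = \kappa$ for $U$-almost every $z$, exactly as in the remark following the backdrop Proposition), and $M^\lambda \subseteq M$. Two absoluteness facts are recorded at this stage: because $M^\lambda \subseteq M$, the models $M$ and $V$ have the same subsets of every $\delta \leq \lambda$, so $\lambda$ is inaccessible in $V$, and the defining clauses of $\mathbb P(\mu,\lambda)$ (regularity of coordinates, support bounds) are absolute between $M$ and $V$. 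Hence, by elementarity, $j(\mathbb P(\mu,\kappa)) = \mathbb P(\mu,\lambda)^M = \mathbb P(\mu,\lambda)^V$.

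Next I would let $I$ be the ideal dual to $U$. Then $I$ is normal and $\kappa$-complete, $\p(Z)/I$ is the trivial two-element algebra, and $I$ is precipitous with its (degenerate, non-generic) ultrapower equal to $M$ itself. Since $\mathbb P(\mu,\kappa)$ is $\kappa$-c.c.\ by the chain-condition lemma (with $\kappa$ inaccessible), Corollary~\ref{dualitynicecase} applies and gives a canonical isomorphism $\mathbb P(\mu,\kappa) * \p(Z)/\bar I \cong \p(Z)/I * j(\mathbb P(\mu,\kappa))$. As $\p(Z)/I$ is trivial and $j(\mathbb P(\mu,\kappa)) = \mathbb P(\mu,\lambda)$ by the previous paragraph, the right-hand side collapses to $\mathbb P(\mu,\lambda)$; passing to the extension by $G$ then yields $\p^{V[G]}(Z)/\bar I \cong \mathbb P(\mu,\lambda)/G$ in $V[G]$. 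I would set $J = \bar I$.

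It then remains to read off the stated properties. Proposition~\ref{ultequal} gives immediately that $J$ is precipitous, normal, and has the same completeness $\kappa$ as $I$. For saturation, the chain-condition lemma (now with $\lambda$ inaccessible) says $\mathbb P(\mu,\lambda)$ is $\lambda$-c.c.; combined with the projection lemma, $\mathbb P(\mu,\lambda) \cong \mathbb P(\mu,\kappa) * (\mathbb P(\mu,\lambda)/\dot G)$, so the quotient $\mathbb P(\mu,\lambda)/G$ is $\lambda$-c.c.\ in $V[G]$ (using that $\lambda$ is regular), whence $J$ is $\lambda$-saturated. The cardinal-structure claims are the standard facts about the Shioya forcing: $\mathbb P(\mu,\kappa)$ is $\mu$-closed, $\kappa$-c.c., and collapses every cardinal in $(\mu,\kappa)$, so $\kappa = \mu^+$ in $V[G]$; and since $|\mathbb P(\mu,\kappa)| = \kappa < \lambda$ with $\lambda$ inaccessible, the Levy--Solovay theorem keeps $\lambda$ inaccessible in $V[G]$.

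I expect the genuine friction to be bookkeeping rather than a conceptual obstacle. The key point to verify with care is that the hypotheses of Corollary~\ref{dualitynicecase} really are met by the degenerate ideal $I$, so that the generic-ultrapower apparatus collapses to the ordinary ultrapower $M$, and that $j(\mathbb P(\mu,\kappa))$ as computed in $M$ truly is the full $\mathbb P(\mu,\lambda)$ of $V$ and not a proper suborder; this is exactly where the closure $M^\lambda \subseteq M$ and the absoluteness of the definition of $\mathbb P(\mu,\lambda)$ are indispensable. Once that is in place, the identification $J = \bar I$ and the transfer of normality and completeness are handed over directly by Lemma~\ref{generated} and Proposition~\ref{ultequal}.
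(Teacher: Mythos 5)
Your proposal is correct and follows essentially the same route as the paper: apply Corollary~\ref{dualitynicecase} to the dual ideal of the ultrafilter and $\mathbb P(\mu,\kappa)$, identify $j(\mathbb P(\mu,\kappa))$ with $\mathbb P(\mu,\lambda)$ (the paper's one-line remark ``$j\circ\sigma = id$'' is exactly your observation that the regular embedding agrees with the projection $\sigma$, so the quotient is $\mathbb P(\mu,\lambda)/G$), and read off normality and completeness from Proposition~\ref{ultequal}. Your extra care about the degenerate generic ultrapower, the absoluteness of $\mathbb P(\mu,\lambda)$ between $M$ and $V$, and the $\lambda$-c.c.\ of the quotient just makes explicit what the paper leaves implicit.
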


\begin{proof}Let $I$ be the dual ideal to a $\kappa$-complete normal ultrafilter on $Z$ and $j$ the associated embedding.  By Corollary~\ref{dualitynicecase}, $\mathbb P(\mu,\kappa) * \p(Z)/\bar I \cong \mathbb P(\mu,\lambda)$.  Since $j \circ \sigma = id$, $\mathbb P(\mu,\lambda) / \sigma^{-1}[G] = \mathbb P(\mu,\lambda) / j[G]$, so the map given by Theorem~\ref{dualitygen} shows the desired isomorphism.   The normality and completeness claims follow from Proposition~\ref{ultequal}.  It is clear that $\mathbb P(\mu,\lambda)$ forces $\kappa = \mu^+$ and preserves the inaccessbility of $\lambda$.  
\end{proof}

\begin{lemma}[Shioya~\cite{shioyanew}]
Assume GCH, $\kappa$ is regular, $\lambda \leq \kappa$ is inaccessible, and $\mathbb P$ is $\kappa$-c.c.\ and of size $\leq \kappa$.  If $G \subseteq \mathbb P$ is generic, then in $V[G]$ there is a projection $\pi : \col(\kappa,< \! \lambda)^V \to \col(\kappa,< \! \lambda)^{V[G]}$.
\end{lemma}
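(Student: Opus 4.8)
The plan is to compare the two collapses as forcing notions over $V[G]$ and to exhibit the new one as a quotient of the old. Write $\mathbb{Q}=\col(\kappa,<\!\lambda)^V$ and $\mathbb{R}=\col(\kappa,<\!\lambda)^{V[G]}$; since the collapse is trivial unless $\kappa<\lambda$, assume $\kappa<\lambda$. First I would record the preservation facts. As $\mathbb{P}$ is $\kappa$-c.c., it preserves cardinals and cofinalities $\geq\kappa$, so $\kappa$ stays regular; since $|\mathbb{P}|\leq\kappa<\lambda$ and GCH holds in $V$, the L\'evy--Solovay argument (counting names) shows $\lambda$ remains inaccessible in $V[G]$. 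I would also note the covering property flowing from the chain condition: every set of $<\kappa$ ordinals in $V[G]$ is contained in one of size $<\kappa$ lying in $V$ (enumerate it in order type $<\kappa$ and take the union of the $(<\kappa)$-sized sets of possible values, which is again $<\kappa$ because $\kappa$ is regular). Consequently $\mathbb{R}$ is $\kappa$-closed, $\lambda$-c.c., and of size $\lambda$ in $V[G]$, and it makes $\lambda=\kappa^+$.

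The key structural tool is Easton's lemma. Since $\mathbb{Q}$ is $\kappa$-closed in $V$ and $\mathbb{P}$ is $\kappa$-c.c., Easton's lemma shows that $\mathbb{Q}$ is $\kappa$-distributive when forced over $V[G]$: it adds no new sequences of length $<\kappa$, hence preserves $\kappa$ and everything below it that survived $\mathbb{P}$. Moreover $\mathbb{Q}$ still collapses every cardinal in $(\kappa,\lambda)$ to $\kappa$ over $V[G]$ (a generic surjection is read off exactly as over $V$) and remains $\lambda$-c.c.\ there, so over $V[G]$ the forcing $\mathbb{Q}$ has precisely the same cardinal-collapsing behaviour as $\mathbb{R}$: both preserve $\kappa$ and $\lambda$ and collapse exactly $(\kappa,\lambda)$ to $\kappa$. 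The one discrepancy is closure: $\mathbb{R}$ is $\kappa$-closed over $V[G]$, whereas $\mathbb{Q}$ is merely $\kappa$-distributive there (indeed $\mathbb{Q}$ genuinely fails to be $\kappa$-closed over $V[G]$, since a descending $\omega$-sequence of ground conditions can code a real added by $\mathbb{P}$ and then admit no lower bound in $V$).

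To produce the projection I would build, in $V[G]$, a complete embedding of $\mathbb{R}$ into the Boolean completion $\mathcal{B}(\mathbb{Q})$ computed in $V[G]$; a projection $\pi:\mathbb{Q}\to\mathbb{R}$ is exactly the adjoint of such an embedding. Because $\mathbb{R}$ is $\lambda$-c.c., it is pinned down by the $\lambda$ maximal antichains one must hit, so I would embed it by a recursion of length $\lambda$, at each step assigning conditions of $\mathbb{R}$ to elements of $\mathcal{B}(\mathbb{Q})$ so as to preserve order, incompatibility, and the suprema of the antichains already treated. Here the $\lambda$-c.c.\ of both forcings keeps the bookkeeping at size $\lambda$, the strong homogeneity of the L\'evy collapse provides enough room in $\mathcal{B}(\mathbb{Q})$ to place each successive antichain, and the $\kappa$-distributivity furnished by Easton's lemma is what allows the recursion to pass through limit stages without collapsing $\kappa$. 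The adjoint of the resulting embedding is the desired $\pi$.

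The main obstacle is exactly the closure discrepancy isolated above: one cannot simply observe that $\mathbb{Q}$ is dense in $\mathbb{R}$ or conversely (it is not, since ground conditions cannot extend the non-ground conditions of $\mathbb{R}$), so the projection must be manufactured rather than read off. Easton's lemma is decisive, as it guarantees that, despite failing to be $\kappa$-closed over $V[G]$, the ground collapse $\mathbb{Q}$ still preserves $\kappa$ and collapses precisely the right interval, which is what lets $\mathbb{R}$ sit as a complete subalgebra of $\mathcal{B}(\mathbb{Q})$. Everything else — the preservation facts and the covering property — is routine, and the length-$\lambda$ recursion is the standard device for embedding a $\lambda$-c.c.\ L\'evy collapse.
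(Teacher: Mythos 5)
The preparatory observations are fine, but the heart of the lemma --- actually producing the projection --- is asserted rather than proved, and the one tool you offer to make it work does not do the job. You propose to realize $\mathbb{R}=\col(\kappa,<\!\lambda)^{V[G]}$ as a complete subalgebra of $\mathcal{B}(\mathbb{Q})$, $\mathbb{Q}=\col(\kappa,<\!\lambda)^{V}$, ``by a recursion of length $\lambda$'' that places the maximal antichains of $\mathbb{R}$ one at a time, with $\kappa$-distributivity (Easton) carrying the recursion through limits. This is exactly where the argument breaks. The standard absorption/universality constructions for L\'evy collapses need $\kappa$-\emph{closure} of the target algebra: at a stage where you have accumulated $<\kappa$ many commitments you must take a nonzero infimum of a descending sequence. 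Distributivity does not supply this. Indeed, as you yourself note, $\mathbb{Q}$ is not $\kappa$-closed over $V[G]$; worse, a descending $\omega$-sequence $\langle q_n\rangle$ of conditions of $\mathbb{Q}$ coding a new real has no lower bound in $\mathbb{Q}$, and since $\mathbb{Q}$ is dense in $\mathcal{B}(\mathbb{Q})$ this forces $\bigwedge_n q_n=0$ in the completion. So the infima your recursion would need can genuinely vanish, and $\kappa$-distributivity (a statement about dense open sets, compatible with such vanishing infima) cannot rescue it. ``Strong homogeneity provides enough room'' is likewise a placeholder for the actual combinatorial content. In short, the proposal reduces the lemma to a universality property of $\mathcal{B}^{V[G]}(\mathbb{Q})$ that is not known to hold and whose usual proof is blocked by precisely the closure failure you isolated.

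The paper's proof sidesteps all of this with an explicit, global definition of $\pi$, and this is where GCH and the $\kappa$-c.c.\ are really used: one enumerates $\mathbb{P}$-names for ordinals as $\langle\tau_\alpha:\alpha<\lambda\rangle$ so that every name forced to be below a regular $\eta\in[\kappa,\lambda]$ is equal to some $\tau_\alpha$ with $\alpha<\eta$ (the counting of nice names needs GCH and the chain condition), and then sets $\pi(q)=\tau_q^G$, where $\tau_q$ has the same domain as $q$ and value $\tau_{q(\alpha,\beta)}$ at $(\alpha,\beta)$ --- that is, the ordinal entries of a ground-model condition are reinterpreted as indices into the name enumeration. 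The projection property is then verified directly: given $p\Vdash\dot q_1\le\pi(q_0)$, the $\kappa$-c.c.\ covers $\dom\dot q_1$ by a ground-model set $d$ of size $<\kappa$, and one reads off a ground-model condition $q_2\le q_0$ with $p\Vdash\pi(q_2)\le\dot q_1$. No recursion of length $\lambda$, no homogeneity, and no Easton's lemma are involved. If you want to salvage your outline, the right move is not to embed antichain-by-antichain but to exhibit a $\mathbb{Q}$-name $\dot H$ for an $\mathbb{R}$-generic filter with $\|r\in\dot H\|\neq 0$ for every $r\in\mathbb{R}$ --- and the name enumeration above is precisely the device that produces such an $\dot H$.
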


\begin{proof}
First note that $\col(\kappa,\lambda)$ is isomorphic to the subset of conditions $q$ such that $\dom(q) \subseteq \reg \times \kappa$, since $\col(\kappa,\eta^+) \cong \prod^{<\kappa \supp}_{\alpha \in [\eta,\eta^+]} \col(\kappa,\alpha)$ for all $\eta$.  So we work with this partial order instead.

Inductively choose a sequence of $\mathbb P$-names for ordinals below $\lambda$, $\langle \tau_\alpha : \alpha < \lambda \rangle$, such that for every regular $\eta \in [\kappa,\lambda]$ and every $\mathbb P$-name $\sigma$, if $\Vdash \sigma < \check \eta$, then there is $\alpha < \eta$ such that $\Vdash \sigma = \tau_\alpha$.  For a given $q \in \col(\kappa,\lambda)^V$, let $\tau_q$ be the $\mathbb P$-name for a function such that $\Vdash \dom(\tau_q) = \dom(\check q)$, and for all $(\alpha,\beta) \in \dom(q)$, $\Vdash \tau_q(\alpha,\beta) = \tau_{q(\alpha,\beta)}$.  In $V[G]$, let $\pi(q) = \tau_q^G$.

To show $\pi$ is a projection, suppose $p \in G$ and $p \Vdash \dot q_1 \leq \pi(q_0)$.  By the $\kappa$-c.c., there is some $d \subseteq \lambda \times \kappa$ such that $|d| < \kappa$ and $\Vdash \dom \dot q_1 \subseteq \check d$.  Working in $V$, if $(\alpha,\beta) \in d \setminus \dom( q_0)$, then there is a name $r(\alpha,\beta)$ for an ordinal $< \alpha$ such that $p \Vdash (\alpha,\beta) \in \dom(\dot q_1) \rightarrow \dot q_1(\alpha,\beta) = r(\alpha,\beta)$.  Then $r \cup q_0 = q_2 \leq q_0$, and $p \Vdash \tau_{q_2} \leq \dot q_1$.
\end{proof}

Now we show the relative consistency of the type of ideal mentioned in (2) of Theorem~\ref{incon1}:

\begin{theorem}
\label{succcon}
Assume GCH, $\mu < \kappa < \delta < \lambda$ are regular and $\kappa$ is $\lambda$-huge.  $\mathbb P(\mu,\kappa) * \dot{\col(\delta,< \! \lambda)}$ forces that $\kappa = \mu^+$, $\lambda = \delta^+$, and there is a normal, $\kappa$-complete, $\lambda$-absolutely $\lambda$-saturated ideal $J$ on $[\lambda]^\kappa$ of uniform density $\lambda$.

%such that $\den(\p(A)/J) = \lambda$ for all $A \in J^+$.
\end{theorem}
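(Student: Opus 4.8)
The plan is to handle the two factors of the forcing by different means: Proposition~\ref{inacccon} disposes of $\mathbb P(\mu,\kappa)$ and produces a $\lambda$-saturated ideal of density $\lambda$, and Shioya's lemma is used to realize the collapse $\col(\delta,<\lambda)$ as a \emph{projection} of that ideal, so that the desired $J$ appears as a quotient rather than as a (badly behaved) extension. First I would fix a normal $\kappa$-complete ultrafilter $U$ on $Z=[\lambda]^\kappa$ witnessing that $\kappa$ is $\lambda$-huge, with embedding $j:V\to M$, $\crit(j)=\kappa$, $j(\kappa)=\lambda$, $M^\lambda\subseteq M$, and let $G\subseteq\mathbb P(\mu,\kappa)$ be generic. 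By Proposition~\ref{inacccon}, in $V[G]$ we have $\kappa=\mu^+$, $\lambda$ inaccessible, and a normal $\kappa$-complete $\lambda$-saturated ideal $\bar I$ on $Z$ with $\p(Z)/\bar I\cong\mathbb P(\mu,\lambda)/G$; since $\mathbb P(\mu,\lambda)$ has size $\lambda$ and is nowhere $<\lambda$-dense, $\p(Z)/\bar I$ has uniform density $\lambda$.

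The point to appreciate before proceeding is that $j(\mathbb P)=\mathbb P(\mu,\lambda)*\dot{\col(j(\delta),<j(\lambda))}$, and since $\lambda=j(\kappa)<j(\delta)$, the image collapse sits entirely above $\lambda$. Consequently a naive lift of $j$ through $\col(\delta,<\lambda)$ would present $\p(Z)/J$ as $\p(Z)/\bar I * \col(j(\delta),<j(\lambda))/K$, whose second factor is only $j(\lambda)$-c.c.\ and nowhere $\lambda$-dense; this cannot yield a $\lambda$-saturated, density-$\lambda$ ideal. Instead I would obtain $\mathbb Q:=\col(\delta,<\lambda)^{V[G]}$ as a projection of $\p(Z)/\bar I$. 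The forcing $\mathbb P(\mu,\lambda)$ has $\col(\delta,<\lambda)^V$ as its $\delta$-indexed coordinate, and because $\delta>\kappa$ the coordinate projection $p\mapsto p(\delta)$ survives the quotient by $G$, giving a projection $\p(Z)/\bar I\cong\mathbb P(\mu,\lambda)/G\twoheadrightarrow\col(\delta,<\lambda)^V$. Applying Shioya's lemma with $\delta$ in place of $\kappa$ (legitimate since $\mathbb P(\mu,\kappa)$ is $\delta$-c.c.\ of size $<\delta$, GCH holds, and $\lambda$ is inaccessible) yields in $V[G]$ a further projection $\col(\delta,<\lambda)^V\twoheadrightarrow\col(\delta,<\lambda)^{V[G]}=\mathbb Q$. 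Composing gives $\p(Z)/\bar I\twoheadrightarrow\mathbb Q$, so any $\mathbb Q$-generic $g$ over $V[G]$ extends to a $\p(Z)/\bar I$-generic, and I would define $J$ to be the ideal on $Z$ in $V[G][g]$ whose quotient forcing is the remainder, so that $\mathbb Q*\p(Z)/J\cong\p(Z)/\bar I$.

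With $J$ so defined the verifications are largely bookkeeping. The factor $\mathbb P(\mu,\kappa)$ forces $\kappa=\mu^+$, and $\mathbb Q$ (being $\delta$-closed, and $\lambda$-c.c.\ as $\lambda$ is inaccessible in $V[G]$) forces $\lambda=\delta^+$ while preserving $\kappa=\mu^+$. Since $\p(Z)/J$ is the quotient of the $\lambda$-c.c., uniformly $\lambda$-dense algebra $\p(Z)/\bar I$ by a regular subalgebra, it is itself $\lambda$-c.c.\ and of uniform density $\lambda$ (the density claim using the quotient lemma preceding Lemma~\ref{nodense}). The generic ultrapower of $J$ is the canonical extension of the $\bar I$-ultrapower $\bar j$, so it has critical point $\kappa$, sends $\kappa$ to $\lambda$, and fixes $[id]=\bar j[\lambda]$; hence $J$ is normal and $\kappa$-complete, as these are read off the embedding exactly as in Proposition~\ref{ultequal}. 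Finally $\p(Z)/J$ is forcing-equivalent to a product of Lévy collapses (the coordinates of $\mathbb P(\mu,\lambda)/G$ other than the $\delta$-th), which has an absolute definition; as in the Cohen-forcing remark after the Definition, such a poset is $Z$-absolutely $\lambda$-c.c., giving the $\lambda$-absolute $\lambda$-saturation of $J$.

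The main obstacle is exactly the obstruction noted above: $\lambda$-saturation is unavailable by lifting $j$ and extending $\bar I$, because $\col(\delta,<\lambda)$ is not $\kappa$-c.c.\ and its $j$-image is far from $\lambda$-c.c. The real content is therefore the recognition of $\col(\delta,<\lambda)^{V[G]}$ as a projection of $\p(Z)/\bar I$, which is precisely what Shioya's lemma supplies, together with the check that the resulting quotient ideal is genuinely normal and $\kappa$-complete and carries the intended generic embedding.
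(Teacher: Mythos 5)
Your overall architecture matches the paper's: Proposition~\ref{inacccon} disposes of $\mathbb P(\mu,\kappa)$, the coordinate projection plus Shioya's lemma realizes $\col(\delta,<\lambda)^{V[G]}$ as a projection of $\p(Z)/\bar I \cong \mathbb P(\mu,\lambda)/G$, and $J$ is to be the ideal whose quotient is the remainder. But there are two genuine gaps. First, you cannot simply ``define $J$ to be the ideal whose quotient forcing is the remainder'': to produce an ideal on $Z$ in $V[G][H]$ at all, one must verify the hypotheses of Theorem~\ref{dualitygen} for $\mathbb P=\col(\delta,<\lambda)^{V[G]}$ over $\bar I$, which means exhibiting, as a $\p(Z)/\bar I$-name, a filter $\hat H\subseteq\col(j(\delta),<j(\lambda))^{M[\hat G]}$ that is generic over $M[\hat G]$ and satisfies $\hat j^{-1}[\hat H]\supseteq H$. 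This is the core of the paper's proof: one uses $M[\hat G]^\lambda\cap V[\hat G]\subseteq M[\hat G]$ and the $\delta$-directed closure of the collapse to find a master condition $m$ below $\hat j[H]$, and then uses GCH (so $j(\lambda)<\lambda^{++}$, hence only $\lambda^+$ many dense sets of $M[\hat G]$ to meet) together with the closure of the collapse seen from $V[\hat G]$ to build $\hat H\ni m$ by transfinite recursion. You flag this as ``a check,'' but it is the main construction; without it the remainder is not the quotient of any ideal on $Z$ in $V[G][H]$, and normality and completeness cannot be ``read off'' an embedding that has not been shown to exist. (The paper itself notes that exactly this filter construction is what blocks the analogous result when $\lambda$ is collapsed to $\kappa^+$, so it is not a formality.)

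Second, your argument for the $\lambda$-absolute $\lambda$-c.c.\ does not work. The remark after the Definition covers $\add(\omega,\alpha)$, whose c.c.c.\ is a ZFC theorem about an absolutely defined finite-support poset, and $\mu$-centered or $\mu$-c.c.\ posets; $\mathbb P(\mu,\lambda)/(G*H)$ is neither, and its $\lambda$-c.c.\ rests on a $\Delta$-system argument that uses the inaccessibility of $\lambda$ in $V$. In a generic ultrapower $i:V'\to N$ by a normal $\lambda^+$-saturated ideal on $\lambda$, the relevant poset is $i(\mathbb P(\mu,\lambda)^V)$ computed in $N$, a putative antichain of size $\lambda^+$ lives in $V'[A]$ rather than in $N$, and the cardinal arithmetic behind the usual $\Delta$-system lemma is unavailable there. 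The paper therefore isolates the more absolute structural properties (a) and (b) of $\mathbb P(\mu,\lambda)$ and reruns the $\Delta$-system argument in $V'[A]$ using the G\"odel ordering on triples; this is a substantive argument, not a consequence of the collapse ``having an absolute definition.''
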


\begin{proof}
Let $I$ be the dual ideal to a $\kappa$-complete normal ultrafilter on $[\lambda]^\kappa$, and $j : V \to M$ the associated embedding.  It is easy to show that under $2^\lambda = \lambda^+$, $\lambda^+ < j(\lambda) < \lambda^{++}$.   Let $G \subseteq \mathbb P(\mu,\kappa)$ be generic over $V$.  By Proposition~\ref{inacccon}, in $V[G]$, $\bar I$ is a $\kappa$-complete normal ideal on $[\lambda]^\kappa$ such that $\p([\lambda]^\kappa)/ \bar I \cong \mathbb P(\mu,\lambda)/ \sigma^{-1}[G]$.

If $\hat G \subseteq P(\mu,\lambda)$ is generic extending $G$, then we can extend the embedding to $\hat j : V[G] \to M[\hat G]$, and by Proposition~\ref{ultequal}, $\hat j$ is also a generic ultrapower embedding arising from $\bar I$.  There is a projection from $P(\mu,\lambda)/ G$ to $\col(\delta, < \! \lambda)^V$, and by Shioya's lemma, there is in $V[G]$ a projection from $\col(\delta, < \! \lambda)^V$ to $\col(\delta, < \! \lambda)^{V[G]}$.  Let $H \subseteq \col(\delta,\lambda)^{V[G]}$ be the generic thus projected from $\hat G$.  Since $M[\hat G]^\lambda \cap V[\hat G] \subseteq M[\hat G]$, and $\col(\delta,< \! \lambda)$ is $\delta$-directed closed and of size $\lambda$, we may take in $M[G]$ a condition $m \in \col(j(\delta), < \! j(\lambda))^{M[\hat G]}$ that is stronger than $j(q)$ for all $q \in H$.  Since $j(\lambda)$ is inaccessible in $M[\hat G]$, and $j(\lambda)<\lambda^{++}$, we may list all the dense open subsets of $\col(j(\delta), < \! j(\lambda))^{M[\hat G]}$ that live in $M[\hat G]$ in ordertype $\lambda^+$ and build a filter $\hat H$ generic over $M[\hat G]$ with $m \in \hat H$.

Thus we have a $\p([\lambda]^\kappa)/\bar I$-name for a filter $\hat H \subseteq \col(j(\delta), < \! j(\lambda))$ that is generic over $M[\hat G]$, and such that $\hat j^{-1}[\hat H]$ is $\col(\delta,< \! \lambda)$-generic over $V[\hat G]$.  Theorem~\ref{dualitygen} implies that in $V[G][H]$, there is a normal, $\kappa$-complete ideal $J$ on $[\lambda]^\kappa$ with quotient algebra isomorphic to $\mathbb P(\mu,\lambda)/(G * H)$.  Because coordinates in $[\kappa,\lambda) \setminus \{ \delta \}$ are ignored by the projection, this forcing has uniform density $\lambda$.

It remains to show the absoluteness of the $\lambda$-c.c.  First we note the following properties of $\mathbb P(\mu,\lambda)$ in $V$:
\begin{enumerate}[(a)]
\item For all $\eta < \lambda$, $| \{ p \restriction \eta^3 : p \in \mathbb P(\mu,\lambda) \} | < \lambda$.
\item If $\eta < \lambda$ is regular and $\dom p \subseteq \eta \times \lambda^2$, then the ordertype of $\dom p$ in the Gödel ordering on triples is less than $\eta$.  This is simply because $|p| < \eta$.
\end{enumerate}
Now work in $V^\prime = V[G][H]$.   Suppose $K$ is a normal $\lambda^+$-saturated ideal on $\lambda$, let $A \subseteq \p(\lambda)/K$ be generic, let $i : V^\prime \to N$ be the associated embedding.  It suffices to show that $i(\mathbb P(\mu,\lambda)^V)$ is $\lambda^+$-c.c.\ in $V^\prime[A]$.  This is because the nature of the projection from $\mathbb P(\mu,\lambda)$ to $\mathbb P(\mu,\kappa) * \dot{\col(\delta,< \! \lambda)}$ gives that two functions in $\mathbb P(\mu,\lambda) / (G*H)$ are incompatible just when they disagree at some point in their common domain, which is the same criterion for incompatibility in $\mathbb P(\mu,\lambda)$.  We will make a $\Delta$-system argument, but using the more absolute properties (a) and (b) above instead of pure cardinality considerations.

Let $\{ p_\xi : \xi < \lambda^+ \}$ be a set of conditions in $i(\mathbb P(\mu,\lambda)^V)$.  Since $\mu < \lambda^+$ and we have GCH, we can take some $X \in [\lambda^+]^{\lambda^+}$ such that $\{ \{ \alpha : \exists \beta \exists \gamma (\alpha,\beta,\gamma) \in p_\xi \} : \xi \in X \}$ forms a $\Delta$-system with root $r$.   Denote $\dom(p_\xi \restriction r \times (\lambda^+)^2)$ by $d_\xi$.  There are unboundedly many $\eta < \lambda$ satisfying (b), so let $\sup r < \eta < \lambda^+$ be such that for all $\xi \in X$, the ordertype of $d_\xi$ in the Gödel ordering on $(\lambda^+)^3$ is less than $\eta$.  WLOG we may assume $\ot(d_\xi) = \eta$ for all $\xi \in X$.  Let $\langle d_\xi(\nu) : \nu < \eta \rangle$ list the elements of $d_\xi$ in increasing order.  If there is no $\nu$ such that $\{ d_\xi(\nu) : \xi \in X \}$ is unbounded in the Gödel ordering, then there is some $\zeta < \lambda^+$ such that $d_\xi \subseteq \zeta^3$ for all $\xi \in X$.  But by property (a) and elementarity, there are $<\lambda^+$ many distinct elements of $i(\mathbb P(\mu,\lambda)^V) \restriction \eta^3$, so we do not have an antichain.  Otherwise, let $\nu_0$ be the least ordinal $<\eta$ such that $\{ d_\xi(\nu_0) : \xi \in X \}$ is unbounded.  We can recursively choose $\langle \xi_\alpha : \alpha < \lambda^+ \rangle$ such that for $\alpha < \beta$, $d_{\xi_\alpha}(\nu_0)$ is above all elements of $d_{\xi_\beta}(\nu_0)$ in the Gödel ordering.  Let $\zeta$ be such that $\{ d_{\xi_\alpha}(\nu) : \alpha < \lambda^+$ and $\nu < \nu_0 \} \subseteq \zeta^3$.  Again by property (a), there must be $\alpha < \beta$ such that $p_{\xi_\alpha}$ is compatible with $p_{\xi_\beta}$.  This proves the $\lambda$-absolute $\lambda$-c.c.
\end{proof}

%Using a technique of Magidor described in \cite{foremanhandbook}, one can show that if $\kappa$ is $\lambda$-huge (even just $\lambda$-almost huge) and $\mu< \kappa$ is regular, then there is a $\kappa^+$-saturated ideal on $\kappa$ after forcing with $\mathbb P(\mu,\kappa) * \dot{\col(\kappa,<\!\lambda)}$.  If the Levy collapse is replaced by the Silver collapse in the definition of $\mathbb P(\mu,\kappa)$, then we can alternatively argue as Kunen~\cite{kunensat} with a master condition $m$ as above in the case that $\kappa$ is $\lambda$-huge.  In either version, the above argument will show that the ideal on $\kappa$ is $Z$-absolutely $\kappa^+$-saturated for $Z =  \{ z \subseteq \theta : z \cap \kappa \in \kappa$ and $\ot(z \cap \theta) = \kappa \}$ where $\theta > \kappa$ is inaccessible.  This gives an alternative route to the consistency of (1) in Proposition~\ref{inaccincon}.

Using a technique of Magidor described in \cite{foremanhandbook}, one can show that if $\kappa$ is $\lambda$-huge (or just $\lambda$-almost huge) and $\mu< \kappa$ is regular, then there is a $\kappa^+$-saturated ideal on $\kappa$ after forcing with $\mathbb P(\mu,\kappa) * \dot{\col(\kappa,<\!\lambda)}$.  It is unknown whether there can exist a successor cardinal $\kappa$ and a normal, $\kappa$-complete, $\kappa^+$-saturated ideal on $[\kappa^+]^\kappa$.  In the above construction,  the closest we can come is a $\kappa^{++}$-saturated ideal on $[\kappa^{++}]^\kappa$.  The essential reason for this is not in getting the master condition $m$, as this problem can be overcome by constructing $\mathbb P(\mu,\kappa)$ with the Silver collapse instead of the Levy collapse.  Rather, the construction of the filter $\hat H$ seems blocked when we collapse $\lambda$ to  $\kappa^+$.

%If $\theta \geq \lambda$ is regular, then the above argument shows that this ideal will be $Z$-absolutely $\kappa^+$-c.c.\ whenever $\theta \geq \lambda$ is such that $\theta \subseteq \bigcup Z$ and $\forall z \in Z(z \cap \kappa \in \kappa$ and $\ot(z \cap \theta) = \kappa)$.  This gives an alternative route to the consistency of (1) in Proposition~\ref{inaccincon}.

Finally, we note the mutual consistency of these different ``generic hugeness'' properties of a successor cardinal:
\begin{proposition}
Assume GCH, $\mu < \kappa < \delta < \lambda_0 < \lambda_1$ are regular, and $\kappa$ is both $\lambda_0$-huge and $\lambda_1$-huge.  Then there is a generic extension in which $\kappa = \mu^+$, $\lambda_0 = \delta^+$, $\lambda_1$ is inaccessible, and:
\begin{enumerate}[(1)]
\item There is a normal, $\kappa$-complete, $\lambda_0$-saturated ideal on $[\lambda_0]^\kappa$.
%\item There is a normal, $\kappa$-complete, $\lambda_0$-absolutely $\lambda_0$-saturated ideal on $\p_\kappa(\delta)$.
\item There is a normal, $\kappa$-complete, $\lambda_1$-saturated ideal on $[\lambda_1]^\kappa$.
\end{enumerate}
\end{proposition}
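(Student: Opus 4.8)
The plan is to do a single forcing, $\mathbb P = \mathbb P(\mu,\kappa) * \dot{\col(\delta,<\!\lambda_0)}$, and to extract the two ideals from two different huge embeddings inside the one extension $V[G][H]$. Fix normal $\kappa$-complete ultrafilters witnessing that $\kappa$ is $\lambda_0$-huge and $\lambda_1$-huge, with embeddings $j_0 : V \to M_0$ and $j_1 : V \to M_1$ and dual ideals $I_0,I_1$. Since $|\mathbb P| \le \lambda_0 < \lambda_1$, forcing with $\mathbb P$ leaves $\lambda_1$ inaccessible, while $\mathbb P(\mu,\kappa)$ makes $\kappa = \mu^+$ and $\col(\delta,<\!\lambda_0)$ makes $\lambda_0 = \delta^+$.

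Clause (1) is immediate: the hypotheses on $\mu,\kappa,\delta,\lambda_0$ and $j_0$ are exactly those of Theorem~\ref{succcon}, so $\mathbb P$ forces a normal, $\kappa$-complete, $\lambda_0$-saturated ideal $J_0$ on $[\lambda_0]^\kappa$, and nothing beyond that theorem is needed. The work is in clause (2), which I would obtain by re-running the proof of Theorem~\ref{succcon} with $j_1$ in place of $j_0$, but keeping the collapse at $\col(\delta,<\!\lambda_0)$ rather than $\col(\delta,<\!\lambda_1)$ --- this is precisely what keeps $\lambda_1$ inaccessible. By Proposition~\ref{inacccon} applied to $j_1$, after $G \subseteq \mathbb P(\mu,\kappa)$ the generated ideal $\bar I_1$ on $[\lambda_1]^\kappa$ has quotient isomorphic to $\mathbb P(\mu,\lambda_1)/\sigma^{-1}[G]$, and a generic for it extends $j_1$ to $\hat j_1 : V[G] \to M_1[\hat G]$ with $M_1[\hat G]^{\lambda_1} \cap V[\hat G] \subseteq M_1[\hat G]$.

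Now I would build the second coordinate of the lifted generic. Since $|\bigcup H| \le \lambda_0 < \lambda_1$ and $\delta > \crit(j_1)$, the closure of $M_1[\hat G]$ makes $m = \bigcup j_1[H]$ a single condition of $\col(j_1(\delta),<\!j_1(\lambda_0))^{M_1[\hat G]}$ below every $j_1(p)$, $p \in H$. Using GCH to bound $j_1(\lambda_0) < \lambda_1^{++}$, I can enumerate the dense subsets of this collapse lying in $M_1[\hat G]$ in ordertype $\lambda_1^+$; as the collapse is $<\!j_1(\delta)$-closed and $j_1(\delta) > \lambda_1$, I can diagonalize below $m$, using the $\lambda_1$-closure of $M_1[\hat G]$ at limit stages, to produce a filter $\hat H_2$ generic over $M_1[\hat G]$ whose $\hat j_1$-preimage is $\col(\delta,<\!\lambda_0)$-generic over $V[G]$ and contains $H$. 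Feeding $\hat G * \hat H_2$ to Theorem~\ref{dualitygen} then yields in $V[G][H]$ a normal, $\kappa$-complete ideal $J_1$ on $[\lambda_1]^\kappa$ whose quotient is $\mathbb P(\mu,\lambda_1)/(G*H)$, with normality and completeness coming from Proposition~\ref{ultequal}. This ideal is $\lambda_1$-saturated exactly when $\mathbb P(\mu,\lambda_1)/(G*H)$ is $\lambda_1$-c.c., and since $\mathbb P(\mu,\lambda_1)$ is $\lambda_1$-c.c.\ in $V$ while $|\mathbb P| < \lambda_1$, the $\Delta$-system argument from the end of the proof of Theorem~\ref{succcon} (which invokes only the absolute properties (a), (b) and the regularity of $\lambda_1$) shows the $\lambda_1$-c.c.\ persists in $V[G*H]$. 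As $J_0$ and $J_1$ both live in $V[G][H]$, this gives (1) and (2) at once.

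The step I expect to be the main obstacle is the second coordinate of (2), and in particular the reason one is forced into the duality machinery at all. One cannot simply force $\col(\delta,<\!\lambda_0)$ over $V[G]$ and hope $\bar I_1$ stays saturated: the generic image of this collapse is $\col(j_1(\delta),<\!j_1(\lambda_0))$, which is $<\!j_1(\delta)$-closed but carries antichains of size $j_1(\delta) > \lambda_1$ and so is not $\lambda_1$-c.c., so naive preservation fails. Saturation is recovered only through the principal ideal $K$ furnished by the master condition $m$, and the delicate point is to obtain $m$ and to diagonalize $\hat H_2$ over $M_1[\hat G]$ while the entire collapse sits strictly below the inaccessible $\lambda_1$.
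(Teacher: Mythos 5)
Your proposal is correct and follows essentially the same route as the paper: force once with $\mathbb P(\mu,\kappa) * \dot{\col(\delta,<\!\lambda_0)}$, obtain (1) directly from Theorem~\ref{succcon}, and obtain (2) by re-running that proof with the $\lambda_1$-huge embedding while keeping the collapse at $\lambda_0$, using a master condition over $j_1[H]$ and a diagonally constructed $M_1[\hat G]$-generic filter. The one detail the paper makes explicit that you leave implicit is that $\col(\delta,<\!\lambda_1)^V$ projects to $\col(\delta,<\!\lambda_0)^V$ (and thence, via Shioya's lemma, to $\col(\delta,<\!\lambda_0)^{V[G]}$), which is what makes $H$ --- and hence your condition $m=\bigcup j_1[H]$ and the filter $\hat H_2$ --- definable from $\hat G$ alone, so that the construction really does yield a $\p([\lambda_1]^\kappa)/\bar I_1$-name satisfying the hypotheses of Theorem~\ref{dualitygen}.
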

\begin{proof}
Let $G * H$ be $\mathbb P(\mu,\kappa) * \dot{\col(\delta, < \! \lambda_0)}$-generic.  Then (1) holds by Theorem~\ref{succcon}.  To show (2), we argue almost exactly the same as in the proof of Theorem~\ref{succcon}, noting that $\col(\delta,<\!\lambda_1)^V$ projects to $\col(\delta,<\!\lambda_0)^V$. %$\lambda_1$ is still inaccessible in $V[G][H]$.
\end{proof}

\bibliographystyle{amsplain.bst}
\bibliography{masterbib}

\end{document}